\def\th{\theta}
\def\om{\omega}
\def\gL{\Lambda}
\def\d{\delta}
\def\Re{{\rm{Re}}\,} 
\def\Im{{\rm{Im}}\,} 
\def\a{\alpha}
\def\t{\tau}
\def\b{\beta}
\def\Ai{{\rm Ai}}
\def\s{\sigma}
\def\k{\kappa}
\def\G{\Gamma}
\def\f{\varphi}
\def\l{\lambda}
\newcommand{\de}{\partial}
\def\L{\mathcal L}
\newcommand\supp{\textup{supp}\, }
\newcommand{\R}{{\mathbb{R}}}
\newcommand{\C}{{\mathbb{C}}}
\newcommand{\Z}{{\mathbb{Z}}}
\newcommand{\N}{{\mathbb{N}}}
\newcommand{\T}{{\mathbb{T}}}
\def\ha{\frac{1}{2}}
\def\pa{\partial}
\def\de{\partial}
\def\h{\hslash}
\def\m{{\bf m}}
\def\ra{\rightarrow}
\def\preuve{\begin{proof}} 
\def\ga{\alpha}
\def\gb{\beta}
\def\e{\varepsilon}
\def\gg{\gamma}
\def\gi{\iota}
\def\gl{\lambda}
\def\gO{\Omega}
\def\gS{\Sigma}
\def\gL{\Lambda}
\newtheorem{defi}{Definition}
\newtheorem{prop}{Proposition}
\newtheorem{theo}{Theorem}
\title{Singularities of the wave trace for the Friedlander model}
\author{Yves Colin de Verdi\`ere \footnote{Institut Fourier,
 Unit{\'e} mixte
 de recherche CNRS-UJF 5582,
 BP 74, 38402-Saint Martin d'H\`eres Cedex (France);
yves.colin-de-verdiere@ujf-grenoble.fr} \\
Victor Guillemin \footnote{MIT; supported in part by NSF grant DMS-1005696} \\
David Jerison \footnote{MIT; supported in part by NSF grant DMS-1069225 and the
Stefan Bergman Trust.}
}
\date{January 21, 2014}
\begin{document}

\maketitle
\renewcommand{\thefootnote}{}
%Mathematics Subject Classifications: Primary 60G50; Secondary 35R35, 31C20}
\renewcommand{\thefootnote}{\arabic{footnote}}

\begin{abstract} 
In a recent preprint, \cite{CGJ}, we showed that for the Dirichlet Laplacian
$\Delta$ on the unit disk, the wave trace $\mbox{Tr}(e^{it\sqrt{\Delta}})$, 
which has complicated singularities on $2\pi - \e < t < 2\pi$,
is, on the interval $2\pi < t < 2\pi + \e$, 
the restriction to this interval of a $C^\infty$ function on its
closure. In this paper we prove the analogue of this somewhat
counter-intuitive result for the Friedlander model. The proof for 
the Friedlander model is simpler and more transparent than in the 
case of the unit disk.
\end{abstract}

%\pagebreak
%\tableofcontents

\section*{Introduction}

Let $\Omega$ be a smooth  bounded strictly convex region in the plane,
and let $0<\lambda_1< \lambda_2 \leq \ldots$ be the eigenvalues,
with multiplicity, of the Dirichlet problem for the Laplacian 
on $\Omega$:
\[
\Delta u_j = \lambda_j u_j\ \ \mbox{on} \ \ \Omega;
\quad u_j = 0 \ \ \mbox{on} \ \ \pa \Omega,\qquad
(\Delta = -(\pa_{x_1}^2+ \pa_{x_2}^2))
\]
In the spirit of \cite{CdV0,Chaz,DG,AM,G-M}) we will define the 
{\em wave trace} of the Dirichlet Laplacian $\Delta$ to be the sum
\begin{equation}
  \label{wavetrace}
  Z (t) = \sum_{j=1}^\infty e^{it \sqrt{\lambda_j}}\, .
\end{equation}
By a theorem of Andersson and Melrose \cite{AM}, $\Re Z(t)$ 
is a tempered distribution whose singular support in $t>0$ is 
the closure of the set
\begin{equation}\label{lengthspectrum}
\L = \bigcup_{k=2}^\infty \L_k
\end{equation}
where $\L_k$ is the set of critical values of the length function given
by  
\begin{equation}  \label{lengthfunction}
|x_1-x_2| + |x_2-x_3| + \cdots + |x_k - x_1|,
\end{equation}
in which $x_1$ \dots $x_k$ are distinct boundary points.

Each critical point 
of the length function \eqref{lengthfunction} defines a
polygonal trajectory consisting of the line segments joining $x_i$ to
$x_{i+1}$ and $x_k$ to $x_1$.  These are, by definition, the {\em closed
geodesics} on $\Omega$, and $\L$ is the set of lengths of these geodesics, 
i.e. the {\em length spectrum} 
of $\Omega$.  Each set $\L_k$ is closed, but there are limiting
geodesics as $k\to\infty$.  These limits are closed curves, called
{\em gliding rays},  that lie entirely in $\partial \Omega$.

Near $T$ in $\L$, the nature of the
singularity of $Z(t)$ is fairly well understood in generic
cases (see \cite{G-M}); but, at least at present, this is not so for $T$ 
in the closure of the length spectrum.  
For example, if $T$ is the perimeter of 
$\Omega$, then there are trajectories forming a convex polygon with $k$ sides 
whose perimeters tend to $T$ as $k\to\infty$. 
In general, the limiting lengths $T$ are integer multiples of the perimeter
of $\gO$, and the set $(T-\e,T) \cap \L$ is infinite for every $\e>0$.  
The cumulative effect of infinitely many singularities could be 
quite complicated and has not been analyzed completely even in simple model 
examples.  However to the right of $T$ one doesn't encounter this
problem.  $(T,T+\e)\cap \L = \emptyset $ for sufficiently small $\e>0$,
so that $\Re Z(t)$ is $C^\infty$ for $t\in (T,T+\e)$.  Therefore, several
years ago the second author posed the question, what is
the asymptotic behavior of $Z(t)$ as $t\to T$ from the right?

In one simple case, the case of the unit disk, we showed in \cite{CGJ}
that $Z(t)$ is $C^\infty$ to the right, i.e., for some 
$\epsilon >0$, is $C^\infty$ on 
$[T,T+\epsilon )$.  We will publish details of that result elsewhere;
however in this article we will discuss another example for which we 
have been able to show that this  assertion is true, and, in fact, 
for which the verification is simpler and more transparent than in the  
case of the disk.  This is the 
operator $L$ introduced by F. G. Friedlander in \cite{F}, defined by 
\begin{equation} \label{Friedl operator}
L=-\pa _x ^2 -(1+x)\pa _y ^2, \ \  x\in [0,\infty), \ \ y \in \R/2\pi\Z
\end{equation}
on the manifold $M= [0,\infty)\times (R/2\pi\Z)$ with Dirichlet boundary conditions
at $x=0$.  It represents the simplest approximation to the Laplace
operator near the boundary of a convex domain.  

The operator $L$ has continuous spectrum on the space of functions
that depend on $x$ alone.  On the orthogonal complement of this space,
the spectrum is discrete, and the eigenfunctions of $L$ are described 
in terms of the Airy function $\Ai$ as follows (see Appendix A 
\eqref{Airy function}).
\begin{prop}\label{Friedl spectrum} 
The Dirichlet eigenfunctions of $L$ on 
\[
\{\f\in L^2(M): \ \int_{\R/2\pi \Z} \f(x,y) \, dy = 0 \}
\]
are the functions 
\begin{equation}\label{Friedl eigenfunctions}
\f_{m,n}(x,y) = \Ai(n^{2/3} x - t_m) e^{iny}, 
\quad m = 1, \, 2,\,  \dots, \quad n= \pm 1,\,  \pm 2,\, \dots;
\end{equation}
and their eigenvalues are 
\begin{equation}\label{Friedl eigenvalues}
\gl(m,n) = n^2 + n^{4/3} t_m,
\end{equation}
in which $0< t_1 < t_2 < \cdots$ are the roots of $\Ai(-t)=0$.  
\end{prop}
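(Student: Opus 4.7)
The plan is separation of variables followed by reduction to the Airy equation. Since $L$ commutes with translations in $y$, I would Fourier-expand in $y$ and look for eigenfunctions of the form $\f(x,y)=f(x)e^{iny}$. The subspace on which the problem is posed excludes $n=0$, so we may take $n\in\Z\setminus\{0\}$. Substituting into $L\f=\l\f$ gives the half-line problem
\begin{equation*}
-f''(x)+n^2 x\,f(x)=(\l-n^2)f(x),\quad x\in[0,\infty),
\end{equation*}
with Dirichlet data $f(0)=0$ and $f\in L^2([0,\infty))$.

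Next I would Airy-scale. Setting $s=|n|^{2/3}x$ and $\gm=(\l-n^2)/|n|^{4/3}$ (using $n^2/|n|^{2/3}=|n|^{4/3}$), the chain rule converts the equation to $-f_{ss}+s\,f=\gm f$; the further translation $u=s-\gm$ puts it into the standard Airy form $f_{uu}=u f$. The general solution is $a\,\Ai(u)+b\,\mathrm{Bi}(u)$, and since $\mathrm{Bi}(u)\sim \exp(\frac{2}{3}u^{3/2})$ as $u\to+\infty$ while $\Ai(u)$ decays, the $L^2$ condition at infinity forces $b=0$. The Dirichlet condition at $x=0$ then reads $\Ai(-\gm)=0$, so $\gm=t_m$ for some $m\ge 1$. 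Unwinding the substitutions yields the stated eigenfunctions $\f_{m,n}(x,y)=\Ai(|n|^{2/3}x-t_m)e^{iny}$ and eigenvalues $\l(m,n)=n^2+|n|^{4/3}t_m$.

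To see that these exhaust the spectrum on the stated subspace, I would observe that $L$ commutes with $\pa_y$, so the spectral decomposition respects the Fourier decomposition in $n$. For each $n\neq 0$, the separated operator $-\pa_x^2+n^2 x$ on $[0,\infty)$ has potential tending to $+\infty$ and is therefore limit-point at infinity; its Dirichlet spectrum is purely discrete and is accounted for by the Airy family above. The only real bookkeeping issue is the sign of $n$: since $n$ enters the radial equation only through $n^2$, everything is symmetric in $n$, and the convention $n^{2/3}:=(n^2)^{1/3}=|n|^{2/3}$ keeps the formulas uniform. No step here is genuinely hard; the proposition is essentially the observation that Friedlander's model is constructed precisely to linearize into Airy after Fourier transform in $y$.
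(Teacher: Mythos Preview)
Your proof is correct and follows essentially the same route as the paper's argument in Appendix~A: separation of variables in $y$, reduction of the resulting half-line Schr\"odinger problem to the Airy equation by a scale-and-shift, selection of $\Ai$ by the $L^2$ condition at infinity, and quantization via the Dirichlet condition $\Ai(-\mu)=0$. The only cosmetic differences are that you invoke $\mathrm{Bi}$ explicitly and discuss the $|n|^{2/3}$ convention, whereas the paper phrases the $L^2$ selection as ``unique up to constant'' and appeals to compact resolvent of $L_n$ for discreteness; these are the same facts said differently.
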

We will define the trace of the wave operator associated to $L$ by 
\begin{equation}
\label{Friedl wave trace} 
Z_{F} (t) = \sum_{m=1}^\infty \sum_{n\in \Z\setminus \{0\}} e ^{it \sqrt{\gl(m,n)}}~.
\end{equation}
Our main result is as follows.
\begin{theo}  \label{Friedl theorem} 
Let $\L_F$ be the Friedlander length spectrum, as computed
in Section \ref{sec:geodesics}.  

\noindent
a) The limit points of $\L_F$ are $\bar \L_F \setminus  \L_F = 2\pi \N$.

\noindent
b) $Z_F$ is $C^\infty$ in the complement of $\bar \L_F \cup (-\bar \L_F) \cup \{0\}$

\noindent
c) For each $\ell\in\N$, there is $\e_\ell>0$ such that $Z_F$ extends
to a $C^\infty$ function on $[2\pi \ell, 2\pi \ell + \e_\ell)$. 
\end{theo}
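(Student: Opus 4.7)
Part (a) follows from the geodesic calculation in Section \ref{sec:geodesics}: integrating the Hamilton system of the principal symbol $\xi^2 + (1+x)\eta^2$, a bicharacteristic issuing from $\{x = 0\}$ with tangential momentum $\eta = 1$ and inward transverse momentum $\xi_* > 0$ is parabolic in $x$, has arc length $4\xi_*\sqrt{1 + \xi_*^2}$ per bounce, and advances in $y$ by $\Delta y(\xi_*) = 4\xi_* - \tfrac{8}{3}\xi_*^3$. A closed orbit with $k$ bounces and winding number $j$ in $y$ has total length $4k\xi_*\sqrt{1 + \xi_*^2}$ subject to $k\,\Delta y(\xi_*) = 2\pi j$; as $\xi_*\to 0^+$, $k\xi_*\to \pi j/2$ and the length tends to $2\pi j$ from below, yielding $\bar\L_F\setminus \L_F = 2\pi\N$.

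For parts (b) and (c), I would apply Poisson summation in $n$. Using $\lambda(m,n) = \lambda(m,-n)$, rewrite
\begin{equation*}
Z_F(t) = 2\sum_{m\ge 1}\sum_{n\ge 1} e^{i t h_m(n)}, \qquad h_m(s) := \sqrt{s^2 + s^{4/3}t_m},
\end{equation*}
fix an even cutoff $\chi\in C^\infty(\R)$ with $\chi = 1$ for $|s|\ge 1$ and $\chi = 0$ near $0$, and Poisson-sum $\sum_{n\in\Z}\chi(n) e^{ith_m(|n|)}$ to obtain
\begin{equation*}
I_{m,k}(t) = \int_{\R}\chi(s)\, e^{i[t h_m(|s|) - 2\pi k s]}\, ds, \qquad k\in\Z.
\end{equation*}
Setting $u = s^{-2/3}t_m$, one has $h_m'(s) = (1 + 2u/3)/\sqrt{1+u}$, which is strictly decreasing in $s$ from $+\infty$ to $1$ and satisfies $h_m'(s) = 1 + u/6 + O(u^2)$ as $u \to 0$. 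Hence the stationary-phase equation $t h_m'(s) = 2\pi k$ admits a unique solution in $s > 0$ iff $2\pi k/t > 1$, i.e., iff $t < 2\pi k$; the resulting critical point corresponds via Poisson duality to a closed bouncing geodesic of winding number $k$ from part (a).

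Part (b) then follows by repeated integration by parts in $s$: for $t$ in the complement of $\bar\L_F\cup(-\bar\L_F)\cup\{0\}$, the phase derivative $th_m'(s) - 2\pi k$ is bounded away from zero with quantitative lower bounds, yielding rapid decay in $k$ and, via $t_m\sim (3\pi m/2)^{2/3}$, summability in $m$. For part (c), fix $\ell\in\N$ and $t\in (2\pi\ell, 2\pi\ell + \e)$: when $|k|\le \ell$ we have $2\pi|k|/t < 1 \le h_m'(s)$, so neither branch of the phase has a stationary point and $I_{m,k}(t)$ is $C^\infty$ in $t$ with rapid decay; when $|k|\ge \ell + 1$, $|t - 2\pi k|$ is bounded below uniformly in the one-sided neighborhood, so the stationary-phase contribution is smooth in $t$ with uniform decay as in (b). Summation then assembles $Z_F$ as a $C^\infty$ function on $[2\pi\ell, 2\pi\ell + \e_\ell)$.

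The principal obstacle is obtaining these integration-by-parts bounds \emph{uniformly in $m$}, so that the double sum in $(m,k)$ produces a $C^\infty$ function rather than a distribution with a finite-order singularity at $2\pi\ell$. Since the terms $s^2$ and $s^{4/3}t_m$ in $h_m^2$ are comparable precisely in the transition region $s\sim t_m^{3/2}$, uniform bounds require separately controlling the low-$s$, transition, and high-$s$ regimes; an alternative is a secondary Poisson resummation over $m$ using the density of Airy zeros.
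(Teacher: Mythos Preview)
Your strategy differs from the paper's: you Poisson-sum in $n$ only and keep $m$ as a discrete parameter, whereas the paper Poisson-sums in both variables and analyzes two-dimensional oscillatory integrals. Your route could in principle be made to work, but the step you yourself flag as the ``principal obstacle''---bounds on $I_{m,k}(t)$ and its $t$-derivatives that are summable in $m$, uniformly for $t\in[2\pi\ell,2\pi\ell+\e)$---is the entire content of part~(c), and you do not carry it out. The sharpest instance is $k=\ell$: the non-stationarity you invoke only yields $t\,h_m'(s)-2\pi\ell\ge t-2\pi\ell$, which degenerates as $t\to 2\pi\ell^+$ and gives no estimate uniform up to the endpoint. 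One must instead use $t\,h_m'(s)-2\pi\ell\ge t\,(h_m'(s)-1)$, supply a quantitative lower bound on $h_m'(s)-1$ together with symbol-type control of all its $s$-derivatives, and then verify that iterated integration by parts actually gains joint decay in $s$ and $m$. None of that is in your sketch. For $|k|\ge\ell+1$ the phase has a genuine stationary point, and the sum over $m$ of the stationary-phase contributions is itself an oscillatory sum whose analysis is precisely where the lengths $L_{p,k}\in\L_F$ enter; ``uniform decay as in~(b)'' defers to an argument with the same gap.

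The paper closes this gap by (i) Poisson-summing in both lattice variables, (ii) partitioning the first quadrant into sectors $\G_1,\G_2,\G_3$ according to the ratio $\xi/\eta$, and (iii) working in product-type symbol classes $\Sigma^{\a,\b}$. The decisive input for part~(c) is that in $\G_1$ one has $\pa_\eta F=1+\pa_\eta G$ with $\pa_\eta G$ positive elliptic in $\Sigma^{2/3,-2/3}$, so that $(t\,\pa_\eta F-2\pi\ell)^{-1}\in\Sigma^{-2/3,2/3}$ \emph{uniformly for $t\ge 2\pi\ell$}; iterated integration by parts in both $\xi$ and $\eta$ then produces amplitudes that are integrable over the plane and decay in the dual variables $(p,q)$. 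Your two proposed fixes---a secondary Poisson sum in $m$ and a low/transition/high regime split---are exactly steps~(i) and~(ii); what your outline lacks is the calculus~(iii) that makes the scheme close. Two minor points on part~(a): the advance in $y$ is $\Delta y(\xi_*)=4\xi_*+\tfrac{8}{3}\xi_*^3$ (your sign is wrong, though your ``from below'' conclusion happens to be correct), and you should also check that $2\pi\N\cap\L_F=\emptyset$, which the paper does by noting that $2\pi m=L_{k,\ell}$ would force $\pi$ to be algebraic.
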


The main issue in the proof is to identify appropriate non-classical
symbol properties for the function $\gl(m,n)$.  The Friedlander model is 
significantly easier to treat than other integrable models such as
the disk because the asymptotics of the spectral function 
$\gl(m,n)$ follow easily from well known asymptotics of the Airy function.

The paper is organized as follows. 
In Section \ref{sec:geodesics}, we find the length spectrum of $L$.  
In Section \ref{sec:smoothsymbols}, we show how the Poisson 
summation formula is used to deduce smoothness of the trace when 
the spectral function is a classical symbol.  Next,
we show, in Section \ref{sec:symbols}, that 
the Friedlander spectral function $\gl(m,n)$ belongs
to an appropriate non-classical symbol class of product type.  In 
Section \ref{sec:proof}, we prove the smoothness properties of the 
Friedlander wave trace $Z_F$ (Theorem \ref{Friedl theorem}) 
by dividing the $(m,n)$ lattice into sectors and using 
Poisson summation methods adapted in each sector 
to the symbol properties of $\l(m,n)$.  We conclude, in Section \ref{sec:actions},
by computing the action coordinates of the completely integrable system corresponding
to the operator $L$ and show that they capture the first order properties 
of eigenvalues and eigenfunctions of $L$ in sectors
where $\gl(m,n)$ behaves like a classical symbol.  This final
computation reveals more details about which parts of the spectrum 
are linked to which aspects of the geodesic flow.

In Appendix A, we recall the properties of the Airy function and 
prove Proposition \ref{Friedl spectrum} characterizing the 
spectrum of $L$.  In Appendix B we review the semiclassical analysis
of joint eigenfunctions of commuting operators in 
a general integrable system, showing how to find the parametrization 
of the eigenfunctions by a discrete lattice and how to carry out the computations 
of Section \ref{sec:actions} linking the classical and quantum systems 
in a more general context.

\section{Closed geodesics}
\label{sec:geodesics}

Our first task is to calculate the length spectrum $\L_F$ for
the Friedlander model.  Define the Hamiltonian function $H$ 
as the square root of the symbol of $L$,
\[
H(x,y; \xi, \eta) =  \sqrt{\xi^2 + (x+1)\eta^2}
\]
The integral curves of the Hamiltonian vector field of $H$ restricted
to the hypersurface $H=1$ are defined as solutions to 
\begin{equation}\label{hamiltonian equation}
\dot x = \xi, \quad \dot y = (1+x)\eta, \quad \dot \xi = -\eta^2/2, \quad \dot \eta = 0
\end{equation}
with the constraint
\begin{equation}\label{constraint}
\xi^2 + (1+x)\eta^2 = 1
\end{equation}
We begin by analyzing a single trajectory between boundary points.
\begin{prop}\label{prop: reflection}  Consider an 
integral curve of the Hamiltonian system 
\eqref{hamiltonian equation}-\eqref{constraint} on $0\le t \le T$, 
with
\[
x(0) = 0, \quad x(t) >0 \ \mbox{for} \ 0 < t < T, \quad x(T) = 0
\]
If the initial velocity is $\xi(0) =\xi_0>0$, $\eta(0)= \eta_0$, then
final velocity is given by 
\[
\dot x(T) = -\xi_0, \quad \dot y(T) = \eta_0, 
\]
and 
\[
T = 4\xi_0/\eta_0^2;\quad y(T)-y(0) = 4\xi_0/\eta_0 + (8/3)\xi_0^3/\eta_0^3
\]
\end{prop}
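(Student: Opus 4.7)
The plan is to exploit the fact that the Hamiltonian system \eqref{hamiltonian equation} decouples once one observes that $\eta$ is conserved. With $\eta(t) \equiv \eta_0$, the equation $\dot\xi = -\eta_0^2/2$ integrates immediately to give $\xi(t) = \xi_0 - (\eta_0^2/2)t$, and then $\dot x = \xi$ integrates to the quadratic
\[
x(t) = \xi_0 t - \tfrac{\eta_0^2}{4}t^2.
\]
This single formula drives the entire proof.

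Next I would use the hypothesis $x(T) = 0$ with $T > 0$ to solve for the return time. Since $x(t) = t(\xi_0 - (\eta_0^2/4)t)$, the only positive root is $T = 4\xi_0/\eta_0^2$, which also forces $\eta_0 \ne 0$ (consistent with $\xi_0 < 1$ implied by the constraint \eqref{constraint} at $x=0$, so that $\eta_0 \ne 0$). Substituting $t = T$ into the explicit formulas gives $\dot x(T) = \xi(T) = \xi_0 - (\eta_0^2/2)(4\xi_0/\eta_0^2) = -\xi_0$, while $\dot y(T) = (1 + x(T))\eta(T) = \eta_0$. So the first two claims are just evaluation of the explicit linear/quadratic expressions at $t = T$.

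For the displacement in $y$, I would write
\[
y(T) - y(0) = \int_0^T (1 + x(t))\eta_0\, dt = \eta_0 T + \eta_0 \int_0^T x(t)\, dt,
\]
with $\eta_0 T = 4\xi_0/\eta_0$ giving the first term. The remaining integral is a polynomial in $t$ whose evaluation at $t = T = 4\xi_0/\eta_0^2$ yields $\int_0^T x\, dt = \xi_0 T^2/2 - \eta_0^2 T^3/12 = 8\xi_0^3/(3\eta_0^4)$, and multiplication by $\eta_0$ delivers the second term $(8/3)\xi_0^3/\eta_0^3$.

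There is no real obstacle here: the Hamiltonian flow is completely explicit because $\eta$ is a first integral, so the whole proposition reduces to elementary polynomial integration. The only mild care needed is to verify that the constraint \eqref{constraint} at the boundary, which reads $\xi_0^2 + \eta_0^2 = 1$, is preserved by the explicit formulas (a quick check: $\xi(t)^2 + (1+x(t))\eta_0^2$ differentiated in $t$ gives $2\xi(-\eta_0^2/2) + \xi\eta_0^2 = 0$), and to note that $\eta_0 \ne 0$ so that $T$ is finite, after which the formulas follow by direct substitution.
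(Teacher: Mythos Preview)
Your proof is correct and follows essentially the same approach as the paper: integrate the decoupled equations explicitly using $\eta(t)\equiv\eta_0$, read off $T$ from the positive root of the quadratic $x(t)$, and substitute. The only cosmetic difference is that the paper writes $y(t)$ as an explicit cubic polynomial and evaluates at $t=T$, whereas you compute $y(T)-y(0)=\eta_0\int_0^T(1+x)\,dt$ directly; the arithmetic is identical.
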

\begin{proof} 
Solving the equations, we get
\[
\eta(t) = \eta_0; \quad \xi(t) = \xi_0 - \eta_0^2t/2, \quad  
x(t) = \xi_0 t - \eta_0^2t^2/4
\]
By translation invariance in $y$, we may as well assume $y(0)=0$.  Then
\[
y(t) = \eta_0t + \eta_0\xi_0 t^2/2 - \eta_0^3 t^3/12
\]
and 
\[
x(T) = 0 \implies \xi_0T - \eta_0^2t^2/4 = 0 \implies T= 4\xi_0/\eta_0^2
\]
Substituting for $T$ we obtain
\[
\dot x(T) = \xi_0  - \eta_0^2 T/2 = -\xi_0,
\]
and 
\[
y(T) - y(0)= y(T) = \eta_0 T + \eta_0\xi_0 T^2/2 - \eta_0^3T^3/12 = 
4\xi_0/\eta_0 + (8/3) \xi_0^3/\eta_0^3
\]
Finally, $x(T)=0$ and $\eta(T) = \eta_0$ imply
\[
\dot y(T) = (1 + x(T))\eta(T) = \eta_0
\]
which concludes the proof. 
\end{proof}

We call a continuous curve $(x,y): [a,b]\to  M$ a {\em geodesic}
if there are finitely many points $t_0< t_1 < \cdots t_k$ for which the
curve meets the boundary (i.~e. $x(t_j)=0$) 
and for $t_j < t < t_{j+1}$, the curve is the projection onto
$(x,y)$ of a solution to \eqref{hamiltonian equation} and \eqref{constraint},
and, finally, at each $t_j$ the curve undergoes a reflection in 
the boundary:
\begin{equation}\label{reflection}
\lim_{t\to t_j^+} \dot x(t) 
= - \lim_{t\to t_j^-} \dot x(t); \quad 
\lim_{t\to t_j^+} \dot y(t)  = \lim_{t\to t_j^-} \dot y(t)  \, .
\end{equation}
The Hamiltonian equations are translation invariant in $t$ and
$y$.  Proposition \ref{prop: reflection} shows that a geodesic
with starting velocity $(\dot x(t_j^+),\dot y(t_j^+))=(\xi_0,\eta_0)$ 
has final velocity $(\dot x(t_{j+1}^-),\dot y(t_{j+1}^-)) = (-\xi_0,\eta_0)$.
By \eqref{reflection}, the reflected velocity 
$(\dot x(t_{j+1}^+),\dot y(t_{j+1}^+)) = (\xi_0,\eta_0)$, the
same as the starting velocity.  Consequently, each arc of a geodesic 
between encounters with the boundary is a translate of the others.    
This is analogous to equilateral polygonal trajectories
in the disk.   The figure below depicts a trajectory reflected
at the boundary $x=0$.  Note also that it is tangent to
the caustic, $x= \xi_0^2/\eta_0^2$, depicted by a dotted line.

\bigskip

\begin{center}
\includegraphics[height=.5\textheight]{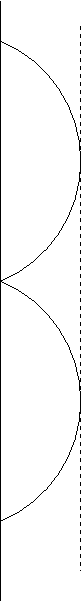} 
\end{center}

\bigskip

\begin{prop} \label{prop: length spectrum}  A closed geodesic with
$k=1, \, 2, \dots $ reflections and winding number $\ell = \pm 1, \, \pm 2, \dots$ 
in the $y$ variable, has length $L_{k,\ell}>0$ determined by the equations
\begin{equation} \label{implicit length formula}
L_{k,\ell}\frac{\eta_0^2}{\sqrt{1-\eta_0^2}} = 4k,\quad
L_{k,\ell} \left( \frac13 \eta_0 + \frac2{3\eta_0}\right) = 2\pi \ell
\end{equation}
\end{prop}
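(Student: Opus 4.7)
The plan is to reduce the entire statement to the one-arc computation already carried out in Proposition \ref{prop: reflection}, together with the observation that the Hamiltonian $H$ is homogeneous of degree $1$ in the momenta, so the parameter $t$ along an integral curve of $H$ (restricted to $\{H=1\}$) equals Riemannian arc length. Thus the length of the geodesic is just the total elapsed time.

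First, I would record the boundary constraint: at any reflection point $x=0$ the constraint \eqref{constraint} becomes $\xi_0^2+\eta_0^2=1$, so
\[
\xi_0 = \sqrt{1-\eta_0^2}.
\]
By Proposition \ref{prop: reflection}, together with the reflection law \eqref{reflection}, the tangential momentum $\eta$ is conserved across reflections, the normal momentum $\xi_0$ is the same at the start of every arc, and every arc is a translate in $(t,y)$ of the first one. Consequently a closed geodesic with $k$ reflections consists of $k$ congruent arcs, each of duration $T=4\xi_0/\eta_0^2$ and each giving a $y$-increment $\Delta y = 4\xi_0/\eta_0 + (8/3)\xi_0^3/\eta_0^3$.

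Second, I would assemble the two closing conditions. The total length is
\[
L_{k,\ell} \;=\; kT \;=\; \frac{4k\sqrt{1-\eta_0^2}}{\eta_0^2},
\]
which is equivalent to the first equation in \eqref{implicit length formula}. Since $y$ lives in $\R/2\pi\Z$ and the geodesic has winding number $\ell$ in $y$, the condition for closure is
\[
k\,\Delta y \;=\; 2\pi \ell.
\]
A short algebraic manipulation, using $\xi_0^2 = 1-\eta_0^2$, gives
\[
k\,\Delta y \;=\; \frac{4k\xi_0}{\eta_0^2}\!\left(\eta_0 + \frac{2\xi_0^2}{3\eta_0}\right)
\;=\; L_{k,\ell}\!\left(\eta_0 + \frac{2(1-\eta_0^2)}{3\eta_0}\right)
\;=\; L_{k,\ell}\!\left(\frac{\eta_0}{3} + \frac{2}{3\eta_0}\right),
\]
yielding the second equation in \eqref{implicit length formula}. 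Negative winding numbers correspond to $\eta_0 < 0$, under which both formulas are symmetric (after the obvious sign in $2\pi\ell$), so there is no loss of generality in taking $\eta_0>0$.

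The one step that deserves a brief justification rather than pure algebra is the identification of the Hamiltonian time with arc length; this follows from Euler's relation applied to $H$, which gives $\xi \dot x + \eta \dot y = H = 1$ on $\{H=1\}$, combined with the fact that $\dot x$ and $\dot y$ are dual to $\xi$ and $\eta$ under the inverse metric $g^{ij}$ dual to the Friedlander metric $g_{11}=1$, $g_{22}=1/(1+x)$. I do not anticipate any serious obstacle: once Proposition \ref{prop: reflection} is in hand, the proof is essentially bookkeeping, with the only real care being taken to justify that $\xi_0$ and $\eta_0$ may be treated as fixed constants across all $k$ arcs.
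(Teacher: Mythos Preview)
Your proof is correct and follows essentially the same route as the paper: reduce to Proposition~\ref{prop: reflection}, use $\xi_0^2+\eta_0^2=1$, set $L_{k,\ell}=kT=4k\xi_0/\eta_0^2$ to get the first equation, impose $k\,\Delta y = 2\pi\ell$, and simplify to get the second. The only addition is your justification that Hamiltonian time on $\{H=1\}$ equals arc length via Euler's relation; the paper simply takes this for granted, so your version is slightly more complete but not different in substance.
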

\begin{proof} Proposition \ref{prop: reflection}
shows that the length of a closed geodesic with $k$ reflections
is $kT$, with $T$ given as in the proposition.  Hence, 
using $\xi_0^2 + \eta_0^2 = 1$, 
\[
L_{k,\ell} = kT = 4k\xi_0/\eta_0^2 \implies 
L_{k,\ell}\frac{\eta_0^2}{\sqrt{1-\eta_0^2}} = 4k \, .
\]
Moreover, if the winding number is $\ell$, then if $y(t)$ is
as in Proposition \ref{prop: reflection} with $y(0)=0$, 
\[
k y(T) = 2\pi \ell \implies
k(4\xi_0/\eta_0 + (8/3)\xi_0^3/\eta_0^3) =  2\pi \ell
\]
Substitituting $L_{k,\ell} = 4k\xi_0/\eta_0^2$ into the last equation,
it can be written 
\[
L_{k,\ell}(\eta_0 + (2/3)\xi_0^2/\eta_0) = 2\pi \ell
\iff L_{k,\ell}\left( \frac13 \eta_0 + \frac 2{3\eta_0}\right)  = 2\pi \ell
\]
\end{proof}

The Friedlander length spectrum is defined by
\[
\L_F = \{L_{k,\ell}: k \in \N,\ \ell \in \Z - \{0\} \}
\]
where $L_{k,\ell}$ solve the equations of Proposition \ref{prop: length spectrum}
for some $\eta_0$, $-1 < \eta_0 < 1$.   Note  that if $\eta_0 $ is replaced 
by $-\eta_0$, then $\ell$ is replaced by $-\ell$ and $L_{k,-\ell}= L_{k,\ell}>0$.  
Thus we may confine our attention to $(k,\ell)\in \N \times \N$.  

The
implicit definition of $L_{k,\ell}$ can be restated as follows. Let 
\[
H:\R_+ \times \R_+ \to \R
\]
be the homogeneous function of degree $1$ that is equal to $1$ on
the curve
\begin{equation}\label{curve 1}
u \ra \left(\frac{\pi }{2}\frac{ u^2}{\sqrt{1-u ^2}}, 
\left( \frac{u}3 + \frac{2}{3u} \right)\right) \quad 0 < u < 1.
\end{equation}
Then $L_{k,\ell} = H(2\pi k,2\pi \ell)$.  

\begin{prop}\label{length structure}  The Friedlander
length spectrum $\L_F$ can be written
\begin{equation}\label{Fr length spectrum}
\L_F = H(2\pi \N\times 2\pi\N)
\end{equation}
Furthermore, 

a) $\bar \L_F = \L_F \cup 2\pi \N$.

b) For each $\ell \in \N$, there is $\e_\ell>0$ such that
$(2\pi \ell, 2\pi \ell + \e_\ell) \cap \L_F = \emptyset$.

c)  $\L_F \cap 2\pi \N = \emptyset$
\end{prop}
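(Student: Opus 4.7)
The plan is to reparametrize the implicit formulas of Proposition \ref{prop: length spectrum}. Writing $u = \eta_0 \in (0,1)$ (so $\ell \in \N$), the two relations of that proposition can be solved explicitly as
\[
L_{k,\ell} \;=\; \frac{4k\sqrt{1-u^2}}{u^2} \;=\; \frac{6\pi\ell\,u}{u^2+2}, \qquad \frac{3u^3}{(u^2+2)\sqrt{1-u^2}} \;=\; \frac{2k}{\pi\ell}.
\]
The last equation has a unique solution $u = u(k,\ell) \in (0,1)$ because its left side is a monotone increasing diffeomorphism $(0,1) \to (0,\infty)$. Since $u \mapsto u/(u^2+2)$ is strictly increasing on $(0,1)$ with supremum $1/3$, one reads off that $L_{k,\ell} < 2\pi\ell$ strictly, and that $L_{k,\ell} \to 2\pi\ell$ precisely as $u \to 1^-$, equivalently as $k/\ell \to \infty$. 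These monotonicity and limit facts are the engine of the proof.

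For part (c), I would argue by contradiction: if $L_{k,\ell} = 2\pi m$ for some $m \in \N$, then substitution into $L_{k,\ell} = 6\pi\ell u/(u^2+2)$ yields the quadratic $m u^2 - 3\ell u + 2m = 0$, so $u$ (and hence $\sqrt{1-u^2}$) is algebraic. Substituting into $L_{k,\ell} = 4k\sqrt{1-u^2}/u^2$ then gives
\[
k \;=\; \frac{\pi m u^2}{2\sqrt{1-u^2}},
\]
which is $\pi$ times a nonzero algebraic number, hence transcendental; this contradicts $k \in \N$. The transcendence of $\pi$ is the crucial ingredient, and this is the step I expect to be the least routine.

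Parts (a) and (b) then follow from short sequential arguments built on the reparametrization above. For (a), the inclusion $2\pi\N \subset \bar\L_F$ is immediate from letting $k \to \infty$ with $\ell$ fixed; combined with (c), this gives $2\pi\N \subset \bar\L_F \setminus \L_F$. For the reverse inclusion, I would take any sequence $L_{k_n,\ell_n} \to T \in \R$ and, after passing to a subsequence, assume $\ell_n$ is either eventually constant or tends to infinity. The case $\ell_n \to \infty$ is incompatible with $T$ finite: boundedness of $L_{k_n,\ell_n} = 6\pi\ell_n u_n/(u_n^2+2)$ combined with $\ell_n \to \infty$ forces $u_n \to 0$, but then $L_{k_n,\ell_n} = 4k_n\sqrt{1-u_n^2}/u_n^2 \to \infty$ since $k_n \ge 1$. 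Thus $\ell_n$ is eventually some constant $\ell'$, and either $k_n$ stabilizes (so $T \in \L_F$) or $k_n \to \infty$ (so $T = 2\pi\ell' \in 2\pi\N$). The same dichotomy handles (b): a sequence in $\L_F$ converging to $2\pi\ell$ from strictly above cannot have $\ell_n \to \infty$; if $\ell_n = \ell'$ is eventually constant, a bounded $k_n$ would force $L_{k,\ell'} = 2\pi\ell$ in contradiction with (c), while $k_n \to \infty$ would force $\ell' = \ell$, contradicting $L_{k,\ell} < 2\pi\ell$.
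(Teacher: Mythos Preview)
Your proof is correct and follows essentially the same route as the paper: the explicit parametrization by $u\in(0,1)$ is just an unwinding of the paper's description via the homogeneous function $H$ and the curve \eqref{curve 1}, the transcendence-of-$\pi$ argument for (c) is identical, and your sequential arguments for (a) and (b) mirror the paper's use of monotonicity in $k$ together with $L_{1,\ell}\to\infty$ as $\ell\to\infty$. The only cosmetic difference is that the paper leans on $L_{k,\ell}<L_{k+1,\ell}$ (monotonicity in $k$) whereas you use the equivalent upper bound $L_{k,\ell}<2\pi\ell$; both yield the same conclusions.
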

\begin{proof} Because the curve \eqref{curve 1} is the graph 
of a decreasing function, $L_{k,\ell} < L_{k+1,\ell}$.  Furthermore,
$L_{1,\ell}\to \infty$ as $\ell\to\infty$. Therefore, every
sequence of distinct values of $L_{k,\ell}$ tends to 
infinity unless $\ell$ remains bounded.  It follows from
this and the fact that the curve \eqref{curve 1} is asymptotic
to the horizontal line of height $1$ that all the limit
points are given by
\[
\lim_{k\to \infty} L_{k,\ell} =  
2\pi \ell \, \lim_{k\to \infty} H(k/\ell, 1) = 2\pi \ell
\]
The sequence $L_{k,\ell}$ is increasing in $k$ for fixed $\ell$,
so (a) and (b) are established.  Finally, to confirm (c), note that 
if $2\pi m = L_{k,\ell}$ for some $(k,\ell)$, then the equations 
\eqref{implicit length formula} imply that $\pi$ is algebraic, a 
contradiction. 
\end{proof}
Proposition \ref{length structure} (a) and (c) imply 
Theorem \ref{Friedl theorem} (a).

\section{Poisson summation formula with classical symbols}
\label{sec:smoothsymbols}

The symbol class $S^m(\R^n)$ is the set of
functions $a\in C^\infty(\R^n)$ satisfying
\begin{equation}\label{symbol def 1}
|\de_\omega^\alpha  a(\omega)| \le C_\alpha (1+|\omega|)^{m-\a_1-\a_2-\cdots -\a_n},\quad \om\in \R^n
\end{equation}
for all multi-indices $\a = (\a_1,\a_2,\dots, \a_n)$ and some constants
$C_\a$.  $S^m_{cl}(\R^n)$ consists of the symbols in $S^m(\R^n)$ 
with an asymptotic expansion,
that is, there are homogeneous functions $a_j$ of degree $m-j$,
smooth except at the origin, and a cut-off function
$\chi_N\in C^\infty(\R^n)$ such that $\chi_N = 1$ on $|\om| \ge C_N$, 
and
\begin{equation}\label{symbol def cl}
\left(a - \sum_{j=0}^N a_j\right)\chi_N  \in S^{m-N-1}(\R^2)
\end{equation}
The function $a_0$ is the principal symbol of $a$. 
Functions defined on a cone $\G \subset \R^n$  belong to the symbol
class $S^m(\G)$ ($S^m_{cl}(\G)$) if they are restrictions of
symbols in the class $S^m(\R^n)$ ($S^m_{cl}(\R^n)$).

Let $F$ be a real-valued symbol of first order, $F\in S_{cl}^1(\R^2)$
with positive principal symbol.  
Let $\T^2$ be the 2-torus, $\R^2 /2\pi \Z^2$, and let $P: C^\infty
(\T^2) \to C^\infty (\T^2)$ be the constant coefficient self-adjoint
first order elliptic pseudodifferential operator given by
\[
P(e^{2\pi i(mx + ny)}) = F(m,n) e^{2\pi i(mx+ny)}
\]
The eigenvalues of $P$ are $F(m,n)$, $(m,n) \in \Z^2$, and hence
its wave trace is
\begin{equation}
  \label{smoothZ}
Z(t) =   \sum_{(m,n)\in \Z^2} e^{it F (m,n)} \, .
\end{equation}
Since the summands are smooth functions of $t$, the singularities 
of  $Z(t)$ depend only on the asymptotic behavior of $F(m,n)$ as 
$(m,n)\to \infty$.   
For simplicity we will assume that $F=F_1$ on $\xi^2 + \eta^2 
\ge  1/2$, where $F_1$ is homogeneous of degree $1$ and positive.
(With small modifications the proof we will give applies in
general.)

To identify the length spectrum, consider the Hamiltonian flow on the
cotangent bundle of $\T^2$ associated with the symbol, $F_1$, 
defined by the ordinary differential equations
 \begin{equation}
   \label{smoothflow}
   \Dot{x} = \partial_\xi F_1, \
  \Dot{y} =\partial_\eta F_1 , \quad 
   \Dot{\xi} = \Dot{\eta} =0,
 \end{equation}
Its periodic trajectories are curves of the form, $(x,y) +t
\nabla F_1 (\xi,\eta)$, $0\le t \le T$, satisfying 
\begin{equation}
  \label{smoothlengthspec}
  T\nabla F_1 (\xi ,\eta) = 2\pi (p,q)
\end{equation}
for some $T>0$ and some $(p,q) \in \Z^2$.  Let $T_{p,q}$ be the set
of $T$  for which \eqref{smoothlengthspec} has a solution
for some $(\xi ,\eta)\neq(0,0)$.  (Note that $\nabla F_1$ is homogeneous
of degree $0$, so that we may as well assume $\xi^2 + \eta^2 = 1$.)
The length spectrum is given by 
\[
\L = \bigcup_{(p,q)\in \Z^2}  T_{p,q}
\]
Note that because $\nabla F_1$ is nonzero, bounded, and continuous
on $\xi^2 + \eta^2 = 1$, the length spectrum
is closed and bounded away from $0$. 
Our goal is to prove that the function $Z(t)$ in 
\eqref{smoothZ} is $C^\infty$ in $t>0$ on the complement of the length 
spectrum.

To do so we will apply the Poisson summation
formula to the function, $e^{itF (\xi,\eta)}$.  The Fourier
transform of this function is
\begin{equation*}
  \int_{\R^2} e^{i(tF (\xi ,\eta) -2\pi (x\xi + y\eta))}\, d\xi d\eta
\end{equation*}
so by the Poisson summation formula,
\begin{equation}
  \label{poissonsumformula}
Z(t) =   \sum_{(p,q)\in \Z^2}  Z_{p,q} (t) 
\end{equation}
where
\begin{equation}
  \label{poissonsummand}
  Z_{p,q} (t) = \int_{\R^2} e^{i (tF(\xi,\eta)-2\pi (p\xi + q \eta))}
      \, d\xi d\eta \, .
\end{equation}
We will first show that on the complement of the length spectrum
these individual summands are $C^\infty$.  If $t_0>0$ and $t_0\notin \L$ 
then there exists an interval, $t_0 - \epsilon \leq t
\leq t_0 +\epsilon$, such that for all $|(\xi,\eta)|\ge 1$, either $t
\partial_\xi F(\xi,\eta) - 2\pi p$ or $t
\partial_\eta F-2\pi q$ is non-zero.  Hence for $t$
on this interval $Z_{p,q}(t)$ can be written as a sum
\begin{equation}
  \label{smoothsplitsum}
  \sum_{r=1}^3 \int_{\R^2} e^{i(tF-2\pi(p\xi+q,n))} 
       \rho_r (\xi, \eta) \, d \xi d\eta
\end{equation}
where $\rho_1$  and $\rho_2$ are smooth homogeneous functions of degree
zero on  $|(\xi,\eta)|\ge 1$ and the first of
the functions above is non-zero on $\supp \rho_1$ and the second
non-zero on $\supp \rho_2$.  Finally $\rho_3$ is supported in 
a compact neighborhood of the origin.  The third integrand
is compactly supported, and the corresponding
integral converges along with all derivatives with respect to $t$,
regardless of the range of $t$.   Let
\begin{equation}
  \label{a0def}
  a_0 (\xi ,\eta, p, t) =
     i \left( t \partial_\xi F- 2\pi p \right)^{-1}\,.
\end{equation}
Then
\begin{equation*}
  e^{i(tF-2\pi (p\xi + q\eta))} \rho_1 =
  a_0 \partial_ \xi
     \left( e^{i (tF-2\pi (p\xi + q \eta))} \right) \rho_1
\end{equation*}
hence by integration by parts the first summand of
(\ref{smoothsplitsum}) can be written as
\begin{equation*}
  \int_{\R^2} e^{i(tF -2\pi (p\xi + q\eta))} a_1 \, d\xi d\eta
\end{equation*}
or inductively as
\begin{equation*}
   \int_{\R^2} e^{i(tF -2\pi (p\xi + q\eta))} a_N \, d\xi d\eta
\end{equation*}
where
\begin{equation}    \label{aNdef}
a_1 = -\partial_ \xi (a_0 \rho_1);
\quad
a_N (\xi,\eta,p,t) = -\partial_ \xi(a_0 a_{N-1})\,.
\end{equation}
Since $a_0$ is a homogeneous function of degree zero in
$(\xi,\eta)$ outside a small neighborhood of the origin, it
follows by induction from (\ref{aNdef}) that $a_N$ is a
homogeneous function of degree $-N$, and hence
\begin{equation}
  \label{aNsymbol}
  |a_N (\xi,\eta,p,t)| \leq C_N (1+|\xi|^2 + |\eta|^2)^{-N/2}
\end{equation}
uniformly on the interval $-\epsilon + t_0 \leq t \leq \epsilon + t_0$.
Differentiation $k$ times with respect to $t$ introduces the factor 
$F^k$ in the integrand, which is dominated by $|(\xi,\eta)|^{k}$; hence
the integral converges.   In all, the first of the summand
in (\ref{smoothsplitsum}) is a $C^\infty$  function of $t$ in the range 
specified.  The proof for the term $\rho_2$ is similar,
replacing $a_0$ by $b_0=i\left( t
  \partial_ \eta F - 2\pi q \right)^{-1}$,  $a_1$ by
$b_1 = -\partial_ \eta \, (b_0 \rho_2)$ and $a_N$ by $b_N =
-\partial_ \eta\, (a_0 \, b_{N-1})$ one concludes by the
same argument that the
second of these summands is $C^\infty$ using
integration by parts in $\eta$.   

For $|p|$ large, $t\partial
F/\partial \xi - 2\pi p$ is non-vanishing for all $(\xi,\eta)$ and
all $t$ in $t_0 - \epsilon \leq t \leq t_0 +
\epsilon$, and integration by parts with respect to $\xi$ yields
\[
Z_{p,q}(t) = \int_{\R^2} e^{i(tF-2\pi(p\xi + q\eta))} \tilde a_N \, d\xi d\eta,
\quad N=1,2,\ldots
\]
with $\tilde a_0 = 1$ and 
\[
\tilde a_N = - \partial_\xi (a_0\tilde a_{N-1})
\]
The estimate \eqref{aNsymbol} is replaced by 
\begin{equation}\label{tildeaNsymbol}
   |\tilde a_N (\xi,\eta,p,t)| \leq C_N (1+ |\xi|^2+|\eta|^2)^{-N/2}|p|^{-(N+1)},\quad N\ge 1
\end{equation}
Similarly, for $|q|$ large, integration by parts with respect to $\eta$ yields
an analogous formula with better decay as $q\to\infty$. 
Hence if we break the sum $\sum Z_{p,q}$ into subsums, $\sum
Z_{p,q}$, $|p| < |q|$, and $\sum Z_{p,q}$, $|p| \geq |q|$, and
make use of these estimates we conclude that not only are the
individual summands of \eqref{poissonsumformula} $C^\infty$ but that 
the sum itself is $C^\infty$ on the complement of the length spectrum 
$\L$.

\section{Non-classical symbols}
\label{sec:symbols}
% r??crire la prop F de fa??n ??ce qu'elle marche dans le cas du disque

Let $\G$ be a cone (sector with vertex at the origin) in $\R^2$.  
\begin{defi}
A smooth function $a: \G \to \C $
belongs to $\Sigma ^{\ga, \gb}(\G) $ if there exists constants
$C_{j,k} $ so that for all $(\xi,\eta)\in \G$, 
the following upper bounds hold
\[ \forall j,k \geq 0,~|\pa _\xi^j \pa _\eta ^k
a(\xi, \eta )|\leq C_{j,k} (1+ |\xi|)^{\ga -j}(1+ |\eta|) ^{\gb -k} ~.\]
\end{defi}
A typical example is $b(\xi) c(\eta) \in \Sigma ^{\ga,\gb}(\R^2)$ if
$b\in S^\a(\R)$ and $c\in S^\b(\R)$.

\begin{prop} \label{symbol properties}  The following properties hold.

a) $S^0(\G)  \subset \Sigma^{0,0}(\G)$.

b) If $a\in \Sigma^{\ga, \gb }(\G)$ and $b\in \Sigma^{\ga', \gb '}(\G)$,
the product $ab$ belongs to 
$ \Sigma^{\ga +\ga', \gb+\gb' }(\G)$.

c) If $a\in  \Sigma^{\ga, \gb }(\G)$ and
$a\geq C (1+ |\xi|)^{\ga }(1+|\eta|) ^{\gb} $ with $C>0$ 
in $\G$ 
($a$ is said to be {\rm elliptic positive}),
then  $a^{-1}\in  \Sigma^{-\ga, -\gb }(\G)$.  Moreover,
$(a+c)^{-1}\in \Sigma^{-\a,-\b}(\G)$ uniformly for all constants $c\ge 0$. 
\end{prop}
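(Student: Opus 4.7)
The plan is to dispatch each part by direct calculation; the only place that requires real care is the inversion statement in (c).

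For (a), if $a\in S^0(\G)$ then $|\de^\alpha a(\om)|\le C_\alpha(1+|\om|)^{-|\alpha|}$ with $\om=(\xi,\eta)$. Since $1+|\om|\ge\max(1+|\xi|,1+|\eta|)$, one has
\[
(1+|\om|)^{-j-k}=(1+|\om|)^{-j}(1+|\om|)^{-k}\le(1+|\xi|)^{-j}(1+|\eta|)^{-k},
\]
which is precisely the $\Sigma^{0,0}(\G)$ bound. For (b), applying the Leibniz rule to $\de_\xi^j\de_\eta^k(ab)$ produces a finite sum in which each summand is the product of a $\Sigma^{\ga,\gb}$ term of bi-order $(j_1,k_1)$ and a $\Sigma^{\ga',\gb'}$ term of bi-order $(j-j_1,k-k_1)$; multiplying the two estimates yields the desired $\Sigma^{\ga+\ga',\gb+\gb'}$ bound.

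For (c), I would use the Fa\`a di Bruno expansion
\[
\de_\xi^j\de_\eta^k(a^{-1})=\sum_{r\ge 1}\sum c_{r}\,a^{-r-1}\prod_{i=1}^r\de_\xi^{j_i}\de_\eta^{k_i}a,
\]
where the inner sum runs over ordered decompositions $(j_1,k_1)+\cdots+(j_r,k_r)=(j,k)$ with each $(j_i,k_i)\ne(0,0)$. Ellipticity gives $a^{-r-1}\le C^{-r-1}(1+|\xi|)^{-\ga(r+1)}(1+|\eta|)^{-\gb(r+1)}$, while the $\Sigma^{\ga,\gb}$ estimate on each factor yields $|\de_\xi^{j_i}\de_\eta^{k_i}a|\le C_{j_i,k_i}(1+|\xi|)^{\ga-j_i}(1+|\eta|)^{\gb-k_i}$. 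Multiplying these bounds and using $\sum j_i=j$, $\sum k_i=k$, each summand is controlled by
\[
C(1+|\xi|)^{-\ga(r+1)+r\ga-j}(1+|\eta|)^{-\gb(r+1)+r\gb-k}=C(1+|\xi|)^{-\ga-j}(1+|\eta|)^{-\gb-k},
\]
which is exactly the $\Sigma^{-\ga,-\gb}$ bound.

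For the uniform assertion about $(a+c)^{-1}$ with $c\ge 0$, I would note two facts. First, $a+c\ge a\ge C(1+|\xi|)^\ga(1+|\eta|)^\gb$, so ellipticity is preserved with the same constant. Second, every partial derivative of order $\ge 1$ of $a+c$ coincides with that of $a$, so the $\Sigma^{\ga,\gb}$ derivative estimates carry over unchanged. Consequently the Fa\`a di Bruno bound above applies verbatim to $(a+c)^{-1}$ with constants independent of $c$. The only mild obstacle throughout is the exponent bookkeeping in (c), which telescopes exactly to the claimed power weights as shown.
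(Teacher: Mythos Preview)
Your argument is correct in all three parts. The paper states this proposition without proof, treating it as a routine consequence of the definition, and your direct verification via Leibniz and Fa\`a di Bruno is exactly the standard way to handle such symbol calculus; in particular, your remark that only derivatives of order $\ge 1$ occur as factors in the Fa\`a di Bruno expansion (so the constant $c$ drops out) together with $(a+c)^{-r-1}\le a^{-r-1}$ is precisely why the uniformity in $c\ge 0$ holds.
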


To establish the symbol properties of the phase $\sqrt{\l(m,n)}$ in
the formula \eqref{Friedl wave trace} for the Friedlander wave trace  $Z_F$,
we will need a well known asymptotic formula
for the roots of the Airy function. 
(A proof is given in Appendix A; see also \cite{AS} page 450.)
\begin{prop} \label{Airy roots} There is a symbol $\t\in S^{2/3}_{cl}(\R_+)$ such that
$\t(\xi)>0$ for all $\xi>1/2$,  
\[
t_m = \t(m), \quad m= 1, \, 2, \dots
\]
are the roots of $\Ai(-t)=0$, and the principal symbol of $\t(\xi)$ is
$(3\pi \xi/2)^{2/3}$ ($\xi\to \infty$).
\end{prop}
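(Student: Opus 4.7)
The plan is to take the standard large-argument asymptotic expansion of $\Ai(-t)$, rewrite it as a single amplitude times cosine with an explicit symbol-valued phase $G(t)$, invert $G$ to obtain a classical symbol $H$ of order $2/3$, and then set $\tau(\xi) = H(\pi(\xi - 1/4))$ after a harmless smooth modification for small $\xi$.

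First, I will combine the standard asymptotic expansion
\[
\Ai(-t) = \pi^{-1/2} t^{-1/4}\bigl[ P(t)\cos(\zeta - \pi/4) + Q(t)\sin(\zeta - \pi/4)\bigr], \quad \zeta = \tfrac{2}{3}t^{3/2},
\]
where $P$ and $Q$ are classical symbols admitting full expansions in powers of $t^{-3/2}$ with $P(t) \to 1$ and $Q(t) = O(t^{-3/2})$, into a single cosine:
\[
\Ai(-t) = \pi^{-1/2} t^{-1/4} R(t)\cos\bigl(G(t) - \pi/4\bigr), \qquad G(t) := \tfrac{2}{3}t^{3/2} - \theta(t),
\]
with $R = \sqrt{P^2 + Q^2}$ positive for $t$ large and $\theta = \arctan(Q/P) \in S^{-3/2}_{cl}(\R_+)$. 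The zeros of $\Ai(-t)$ for $t$ large are then precisely the points where $G(t) \in \pi(\Z + 3/4)$. Since $G'(t) = t^{1/2} - \theta'(t)$ is elliptic positive of order $1/2$, $G$ is strictly increasing on $[T_0, \infty)$ for $T_0$ large, and matching with the leading-order asymptotic $t_m \sim (3\pi(m - 1/4)/2)^{2/3}$ pins down the indexing: $G(t_m) = \pi(m - 1/4)$ for every $m \geq M_0$, with some fixed $M_0$.

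Next I invert. On $[T_0, \infty)$, $G \in S^{3/2}_{cl}$ is a diffeomorphism onto its range, and its inverse $H$ satisfies $H'(s) = 1/G'(H(s))$. Principal-symbol inversion gives $H(s) \sim (3s/2)^{2/3}$; bootstrapping via repeated differentiation and substitution into this identity (a one-dimensional analogue of the bookkeeping in Proposition \ref{symbol properties}) produces a full asymptotic expansion and shows $H \in S^{2/3}_{cl}$ with the stated principal symbol. Then $\tau_0(\xi) := H(\pi(\xi - 1/4))$ is a classical symbol of order $2/3$ on $[M_0, \infty)$ with principal symbol $(3\pi\xi/2)^{2/3}$ and $\tau_0(m) = t_m$ for all integers $m \geq M_0$. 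Finally, extend $\tau_0$ smoothly to all of $\R_+$ so that $\tau(m) = t_m$ for the remaining small values of $m$ and $\tau(\xi) > 0$ for $\xi > 1/2$; this extension differs from $\tau_0$ by a compactly supported smooth function, so membership in $S^{2/3}_{cl}(\R_+)$ and the principal symbol are preserved.

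The main obstacle is the symbol-inversion step: one must verify that the inverse of an elliptic positive classical symbol of positive order on $\R_+$ is again classical with the expected principal symbol and asymptotic expansion. This reduces to a careful induction on derivatives, tracking how terms in the asymptotic expansion of $G$ propagate through the recursion $H'(s) = 1/G'(H(s))$; positivity and ellipticity of $G'$ ensure that at each step the remainder lies in the correct lower-order symbol class, allowing the expansion of $H$ to be read off term by term.
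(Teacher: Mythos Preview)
Your proposal is correct and follows essentially the same strategy as the paper: extract from the large-$t$ asymptotics of $\Ai(-t)$ a monotone phase that is a classical symbol, then invert it. The paper's version differs only tactically: it passes to the variable $s=t^{3/2}$ and defines the phase as $\theta(s)=\arg\bigl(is^{1/3}\Ai(-s^{2/3})-\Ai'(-s^{2/3})\bigr)$, which reduces the inversion step to that of an order-$1$ symbol (followed by composition with $(\cdot)^{2/3}$) and, via an explicit computation of $\theta'(s)$, yields monotonicity for all $s>1/4$, so that $\tau(\xi)=[\theta^{-1}(\pi\xi)]^{2/3}$ already covers every $m\ge 1$ without a separate small-$m$ patching.
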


Denote
\[
F(\xi,\eta) =  (\eta^2 + \eta^{4/3} \t(\xi))^{1/2}
\]
Then $\sqrt{\l(m,n)} = F(m,|n|)$ and the sum 
\eqref{Friedl wave trace} can be written
\[
Z_F(t) = 2\sum_{(m,n)\in \N \times \N} e^{itF(m,n)}
\]
Let $0 < \k_1 < \k_2 <\infty$.  The first quadrant is covered by the union 
of three overlapping sectors
\begin{align*}
\G_1 &= \{(\xi,\eta): |\xi| < \k_1 \eta\}; \\
\G_2 &= \{(\xi,\eta): (\k_1/2) \eta < \xi< 2\k_2 \eta\} \\
\G_3 &= \{(\xi,\eta): \k_2| \eta |< \xi\}
\end{align*}
One can deduce easily from Proposition \ref{Airy roots} the following.
\begin{prop}  \label{Friedl symbol}  Let $G(\xi,\eta) = F(\xi,\eta) - \eta$.  Then 

\noindent
a)  $G\in \Sigma^{2/3,1/3}(\G_1)$, and $\de_\eta G$ is positive,
elliptic in $\Sigma^{2/3,-2/3}(\G_1)$.

\noindent
b)  $F \in S^1_{cl}(\G_2)$ with principal symbol $F_0$ given by 
\[
F_0(\xi,\eta) =  \sqrt{\eta^2 + (3\pi \xi/2)^{2/3}\eta^{4/3}}
\]

\noindent
c)  $F\in \Sigma^{1/3,2/3}(\G_3)$, and  $\de_\xi F$ is positive elliptic
in $\Sigma^{-2/3,2/3}(\G_3)$.
\end{prop}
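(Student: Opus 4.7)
The plan is to treat each of the three sectors separately, exploiting the fact that in $\G_1$ the first term of
\[
F(\xi,\eta)^2 = \eta^2 + \eta^{4/3}\t(\xi)
\]
dominates the second, in $\G_3$ the second dominates the first, and in $\G_2$ the two are comparable. In each case I would extract an explicit elementary factor, rewrite the remainder as a smooth function of a bounded auxiliary variable, and then use Fa\`a di Bruno together with Proposition~\ref{symbol properties} to read off membership in the required $\Sigma^{\a,\b}$ class.

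For part (a), set $u(\xi,\eta) = \eta^{-2/3}\t(\xi)$ and $h(z) = \sqrt{1+z}-1$, so that $G = \eta\, h(u)$. Because $\t(\xi) = O((1+|\xi|)^{2/3})$ and $|\xi|<\k_1\eta$ on $\G_1$, the function $u$ is uniformly bounded, and a direct check gives $u\in\Sigma^{2/3,-2/3}(\G_1)$. Fa\`a di Bruno writes $\de_\xi^j\de_\eta^k h(u)$ as a sum, over partitions $P$ of $(j,k)$, of bounded quantities $h^{(|P|)}(u)$ times products $\prod_{B\in P}\de^{B}u$; the extra factor $(1+|\xi|)^{2(|P|-1)/3}(1+|\eta|)^{-2(|P|-1)/3}$ produced relative to the target weight is $O(1)$ because $(1+|\xi|)\le C(1+|\eta|)$ on $\G_1$, so $h(u)\in\Sigma^{2/3,-2/3}(\G_1)$ and hence $G=\eta\, h(u)\in\Sigma^{2/3,1/3}(\G_1)$. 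A direct computation with $v=\sqrt{1+u}$ gives the closed form
\[
\de_\eta G = \frac{(v-1)(2v-1)}{3v},
\]
which is positive for $u>0$ and satisfies $\de_\eta G \asymp u$ as $u\to 0$; combined with $u\asymp (1+|\xi|)^{2/3}(1+|\eta|)^{-2/3}$ for large $\xi$, this yields ellipticity in $\Sigma^{2/3,-2/3}(\G_1)$, with the derivative bounds obtained by a second application of Fa\`a di Bruno to the same smooth function of $u$.

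Part (c) is structurally the mirror of (a): on $\G_3$ factor out the dominant term, writing
\[
F = \eta^{2/3}\sqrt{\t(\xi)+\eta^{2/3}} = \eta^{2/3}\sqrt{\t(\xi)}\,\sqrt{1+w},\qquad w=\eta^{2/3}/\t(\xi).
\]
Here $w$ is bounded on $\G_3$ because $\t(\xi)\gtrsim\xi^{2/3}\gtrsim\eta^{2/3}$, and one checks $w\in\Sigma^{-2/3,2/3}(\G_3)$; the ellipticity of $\t$ from Proposition~\ref{Airy roots} gives $\sqrt{\t}\in S^{1/3}_{cl}(\R_+)$ and hence $\eta^{2/3}\sqrt{\t(\xi)}\in\Sigma^{1/3,2/3}(\G_3)$; the same Fa\`a di Bruno template gives $\sqrt{1+w}\in\Sigma^{0,0}(\G_3)$, so $F\in\Sigma^{1/3,2/3}(\G_3)$ by Proposition~\ref{symbol properties}(b). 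Differentiation yields
\[
\de_\xi F = \frac{\eta^{2/3}\t'(\xi)}{2\sqrt{\t(\xi)+\eta^{2/3}}},
\]
whose numerator is positive elliptic in $\Sigma^{-1/3,2/3}(\G_3)$ (using $\t'\in S^{-1/3}_{cl}$ with positive principal symbol) and whose denominator is positive elliptic in $\Sigma^{1/3,0}(\G_3)$; Proposition~\ref{symbol properties}(c) then gives the claimed ellipticity in $\Sigma^{-2/3,2/3}(\G_3)$.

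Part (b) is essentially classical. On $\G_2$ the ratio $\xi/\eta$ is bounded above and below, so the classical expansion $\t(\xi) = (3\pi\xi/2)^{2/3}(1+O(\xi^{-1}))$ substituted into $F^2$ produces a positive elliptic classical symbol of degree $2$ with principal part $\eta^2 + (3\pi\xi/2)^{2/3}\eta^{4/3}$, and taking the positive square root yields $F\in S^1_{cl}(\G_2)$ with principal symbol $F_0$. The main technical obstacle is the Fa\`a di Bruno bookkeeping in parts (a) and (c): derivatives of the bounded quantities $u$ and $w$ carry mixed weights in $(1+|\xi|)$ and $(1+|\eta|)$, and it is precisely the sector hypotheses $|\xi|<\k_1\eta$ on $\G_1$ and $\xi>\k_2\eta$ on $\G_3$ that let one trade one variable for the other so that the products of derivatives land in the correct class $\Sigma^{\a,\b}$.
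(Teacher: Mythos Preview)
Your proposal is correct and supplies exactly the kind of computation the paper omits: the paper states only that the proposition ``can be deduced easily from Proposition~\ref{Airy roots}'' and gives no argument, so your factor-out-the-dominant-term and Fa\`a di Bruno bookkeeping is the natural way to carry this out. One small point worth making explicit in (a): the $(0,0)$ estimate $|h(u)|\le C(1+|\xi|)^{2/3}(1+|\eta|)^{-2/3}$ is not covered by Fa\`a di Bruno and relies on $h(0)=0$ (and likewise $g(0)=0$ for $\de_\eta G$), which you use implicitly via $\de_\eta G\asymp u$ but do not state separately; also the ellipticity $u\asymp(1+|\xi|)^{2/3}(1+|\eta|)^{-2/3}$ holds for all $\xi\ge 1/2$, not just large $\xi$, because $\t>0$ there.
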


\section{Poisson summation formula with non-class\-ical symbols}
\label{sec:proof}

We are now ready to finish the proof of 
Theorem \ref{Friedl theorem}.  First, we divide the sum representing
$Z_F$ into three regions corresponding to different behaviors of
the symbol $F(\xi,\eta)$. 
Let $\chi_j\in S^0(\R^2)$ be supported in $\G_j$ and such that
\[
\sum_{j=1}^3 \chi_j(\xi,\eta) = 1 \quad \mbox{for all } \  \xi>0, \ \eta>0, \ \xi^2 + \eta^2 \ge 1/4
\]
Let $\psi \in \C^\infty(\R)$ satisfy $\psi(\xi) = 1$ for all $\xi\ge 1$ 
and $\psi(\xi)= 0$ for all $\xi\le 1/2$.  Then
\[
Z_F(t) = 2\sum_{j=1}^3 I_j(t), \quad
I_j(t) = \sum_{(m,n)\in \Z^2} e^{itF(m,n)} \psi(m)\psi(n) \chi_j(m,n)
\]

The method of Section \ref{sec:smoothsymbols} applies in
the sector $\G_2$.  The times $t$ at which singularities can 
occur are the stationary points $T_{p,q}$ of the phases of the 
representation using the Poisson summation formula, namely,
solutions to 
\begin{equation}\label{stationary}
T_{p,q} \nabla F_0(\xi,\eta) = 2\pi (p,q)
\end{equation}
Furthermore, on $F_0(\xi,\eta) = 1$, 
\[
\nabla F_0(\xi,\eta) = \left(\frac\pi{2} \frac{\eta^2}{\sqrt{1-\eta^2}}, 
\frac13 \eta + \frac 23 \eta^{-1}\right)
\]
Thus if $(k,\ell) \in \N\times \N$, then $T_{k,\ell} = L_{k,\ell}$, exactly
the points of the length spectrum $\L_F$.  The set of $T_{p,q}$ is
\[
-\L_F \cup \{ 0 \} \cup \L_F
\]
(Evidently, $T_{0,0} = 0$ is a stationary point; there are 
no roots $T_{0,q}$, $q\neq0$ and no roots $T_{p,0}$, $p\neq0$ 
of \eqref{stationary}.  The rest of the range $(p,q)\in \Z\times \Z$ 
is covered by the symmetries  $T_{k,-\ell} = T_{k,\ell}$ 
and $T_{-k,-\ell} = -T_{k,\ell}$.)

The restriction to $\G_2$ gives a further limitation on the set of
stationary points.   
Define a smaller collection of stationary points by 
\[
\L_F(M) = \{L_{k,\ell}: (k,\ell) \in \N^2, \quad 1\le k \le M\ell\}
\]
$\L_F(M)$ is a closed, discrete set in $\R$.  
For $M$ sufficiently large depending on $\k_1$,
all the stationary values that occur in the 
the range $\xi \ge \k_1|\eta|/2$ are in $\L_F(M)$.
Because $\L_F(M)$ has no limit points, and the symbol
of the phase has classical behavior in $\G_2$, the method of 
Section \ref{sec:smoothsymbols} applies and yields the following.
\begin{prop}\label{I2smooth} For $M$ sufficiently large
depending on $\k_1$, $I_2$ is smooth in the complement of
\[
\L_F(M) \cup (-\L_F(M)) \cup \{0\}
\]
In particular, for any $\ell\in \N$ and any $\k_1>0$ there is $\e>0$ such
that 
\[
I_2 \in C^\infty((2\pi \ell - \e, 2\pi \ell + \e))
\]
\end{prop}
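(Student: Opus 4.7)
The plan is to apply the Poisson summation framework of Section \ref{sec:smoothsymbols} essentially verbatim to $I_2$, exploiting the fact that $F\in S^1_{cl}(\G_2)$ by Proposition \ref{Friedl symbol}(b). First I would use Poisson summation in both variables to write
\[
I_2(t) = \sum_{(p,q)\in\Z^2} J_{p,q}(t),\qquad J_{p,q}(t) = \int_{\R^2} e^{i(tF(\xi,\eta) - 2\pi(p\xi + q\eta))}\,a(\xi,\eta)\,d\xi\,d\eta,
\]
where $a(\xi,\eta) = \psi(\xi)\psi(\eta)\chi_2(\xi,\eta)\in S^0(\R^2)$ is supported in $\G_2$.

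Next I would identify which times $t$ can produce a stationary point of the phase $\Phi_{p,q,t}:=tF-2\pi(p\xi+q\eta)$ on $\supp a$. Since $F-F_0$ is classical of order $0$ on $\G_2$, $\nabla F$ agrees with the degree-zero $\nabla F_0$ modulo a symbol of order $-1$, so at infinity the stationary condition reduces to $t\nabla F_0(\xi,\eta) = 2\pi(p,q)$. By the explicit expression for $\nabla F_0$ on $\{F_0=1\}$ displayed in \eqref{stationary} and by Proposition \ref{prop: length spectrum}, the only such times with $(\xi,\eta)\in\G_2$ are $t = \pm L_{k,\ell}$, $(k,\ell)\in\N^2$; the constraint $\k_1/2 < \xi/\eta < 2\k_2$ confines $k/\ell$ to a bounded range, hence $k \le M\ell$ for some $M = M(\k_1,\k_2)$. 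Thus all possible stationary times lie in $\L_F(M)\cup(-\L_F(M))\cup\{0\}$.

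For each fixed $t$ in the open complement of this set, I would pick a neighborhood on which $\Phi_{p,q,t}$ is non-stationary on $\supp a$ for every $(p,q)$, and iterate integration by parts as in Section \ref{sec:smoothsymbols}. Writing $a = a^\xi + a^\eta + a^0$ with $t\de_\xi F - 2\pi p \neq 0$ on $\supp a^\xi$, $t\de_\eta F - 2\pi q \neq 0$ on $\supp a^\eta$, and $a^0$ compactly supported, $N$-fold iteration of the operators $-\de_\xi\bigl(i(t\de_\xi F-2\pi p)^{-1}\,\cdot\,\bigr)$ and $-\de_\eta\bigl(i(t\de_\eta F - 2\pi q)^{-1}\,\cdot\,\bigr)$ produces integrands whose $L^1$-norm decays like $(1+|p|)^{-N-1}$ or $(1+|q|)^{-N-1}$ and which remain classical symbols of arbitrarily negative order in $(\xi,\eta)$. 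Hence $\sum J_{p,q}$ converges in $C^\infty$, uniformly with all $t$-derivatives, on the chosen neighborhood.

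Finally, to obtain the second assertion, I would note that $\L_F(M)$ is closed and locally finite: the constraint $1\le k\le M\ell$ in \eqref{implicit length formula} keeps $\eta_0$ in a compact subset of $(0,1)$, so $L_{k,\ell}$ is comparable to $\ell$ and only finitely many $(k,\ell)$ produce $L_{k,\ell}$ in any bounded interval. Combined with Proposition \ref{length structure}(c), which gives $\L_F\cap 2\pi\N = \emptyset$, this yields a neighborhood of $2\pi\ell$ disjoint from $\L_F(M)\cup(-\L_F(M))\cup\{0\}$. The main technical obstacle is controlling the integration-by-parts estimates uniformly in $(p,q)$: one needs $(t\de_\xi F - 2\pi p)^{-1}$ and its derivatives to satisfy classical symbol bounds of the expected negative order uniformly as $|p|\to\infty$ and for $t$ in a compact interval avoiding the stationary set, so that the double sum $\sum_{(p,q)\in\Z^2}J_{p,q}(t)$ converges in $C^\infty$ rather than merely absolutely.
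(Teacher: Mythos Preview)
Your proposal is correct and follows the same route as the paper: the paper simply remarks that since $F\in S^1_{cl}(\G_2)$ by Proposition~\ref{Friedl symbol}(b), the Poisson summation/integration-by-parts argument of Section~\ref{sec:smoothsymbols} applies verbatim, then identifies the stationary times via \eqref{stationary} as $\pm L_{k,\ell}$ with $k\le M\ell$ and notes that $\L_F(M)$ is discrete. Your write-up spells out these steps in more detail (and correctly locates the uniform-in-$(p,q)$ estimate as the one point requiring care, which is exactly what \eqref{tildeaNsymbol} in Section~\ref{sec:smoothsymbols} handles); the only minor refinement is that the bound $k\le M\ell$ comes from the lower constraint $\xi/\eta\ge\k_1/2$ alone, so $M$ depends on $\k_1$ and not on $\k_2$.
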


All the remaining singularities 
in $\bar \L_F -\L_F(M)$ are accounted for in $\G_1$, associated with 
$I_1$.   The main issue is the limit points at $t= 2\pi\ell$.   
Our theorem in $\G_1$ is stated in sufficient 
generality to apply as well to the case of the disk.

\begin{theo}\label{mainbyparts}
Let $\psi(\xi)$ be a classical symbol of degree $0$ in one variable
supported in $\xi \ge 0$ and let $\chi(\xi,\eta) $ be a classical symbol 
of degree $0$ supported in a cone $\G_1$ given by $|\xi  |\leq \k_1 \eta $.
Suppose that the distribution $Z$ is defined by
\[ 
Z(t)=\sum_{(m,n)\in \Z^2}  \psi(m) \chi(m,n) e^{it (n + G(m,n))}~,
\]
where  $G$ belongs to $\Sigma ^{2/3,1/3}$,  
with $\pa _\eta G >0 $  is  positive elliptic in  $\Sigma ^{2/3,-2/3}$
on the support of $\psi(m)\chi(m,n)$.   For any $T>0$ there
exist $\e>0$ and $\k_1>0$ sufficiently small depending on 
$T$ and the constant in the symbol upper bound on $\de_\eta G$
such that 

\noindent
a) if $T = 2\pi \ell$, $\ell\in \N$, then $Z$ is smooth on $[T,T+\e)$;

\noindent
b) if $T\notin 2\pi \N$, then $Z$ is smooth on $(T-\e,T+\e)$.
\end{theo}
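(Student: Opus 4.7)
The plan is to apply Poisson summation in both variables to write
\[
Z(t) = \sum_{(p,q)\in\Z^2} Z_{p,q}(t), \qquad
Z_{p,q}(t) = \int_{\R^2} \psi(\xi)\chi(\xi,\eta)\, e^{i\Phi_{p,q}(\xi,\eta,t)}\, d\xi\, d\eta,
\]
with phase $\Phi_{p,q} = (t-2\pi q)\eta + t\,G(\xi,\eta) - 2\pi p\xi$ and key derivative $\pa_\eta \Phi_{p,q} = (t-2\pi q) + t\,\pa_\eta G$. Since $\pa_\eta G\in \Sigma^{2/3,-2/3}$ is bounded by $C\gk_1^{2/3}$ on the cone $|\xi|\le\gk_1\eta$, choosing $\gk_1$ small enough (depending on $T$) makes $t|\pa_\eta G|$ as small as desired for $|t-T|<\e$. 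Two regimes for $q$ then emerge: an \emph{off-diagonal} regime where $|t-2\pi q|\ge c>0$ (all $q\ne\ell$ in part (a), all $q$ in part (b)), and, in part (a) with $T=2\pi\ell$, the \emph{diagonal} case $q=\ell$.

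In the off-diagonal regime, Proposition \ref{symbol properties}(c), with the constant $t-2\pi q$ dominating the elliptic perturbation $t\,\pa_\eta G$, yields $1/\pa_\eta \Phi_{p,q}\in\Sigma^{0,0}$ with constant $O(|q-\ell|^{-1})$. Iterated integration by parts in $\eta$ converts the amplitude into one in $\Sigma^{0,-N}$ of magnitude $O(|q-\ell|^{-N}(1+\eta)^{-N})$, integrable over the support for any $N>2$ and yielding arbitrary polynomial decay in $|q-\ell|$.

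The crux is the diagonal case $q=\ell$ in part (a). Here $\pa_\eta\Phi_{p,\ell}=(t-2\pi\ell)+t\,\pa_\eta G$ is strictly positive for $t\in[T,T+\e)$ but is not uniformly bounded below as $\eta\to\infty$, so classical non-stationary phase fails. The resolution is the uniform-in-$c$ statement of Proposition \ref{symbol properties}(c): applied with $c=t-2\pi\ell\ge 0$ and $a=t\,\pa_\eta G\in\Sigma^{2/3,-2/3}$, it yields $1/\pa_\eta\Phi_{p,\ell}\in\Sigma^{-2/3,2/3}$ with estimates independent of $t\in[T,T+\e]$. Each $\eta$-integration by parts then replaces an amplitude in $\Sigma^{\ga,\gb}$ by one in $\Sigma^{\ga-2/3,\gb-1/3}$; after $N$ iterations the integrand lies in $\Sigma^{-2N/3,-N/3}$. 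Since the $\xi$-support at height $\eta$ has length $O(\eta)$ and each $\pa_t$ brings down a factor of order $\eta$ (from $\pa_t\Phi=\eta+G$), the double integral converges together with all $t$-derivatives once $N$ is chosen sufficiently large.

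To close the sum, we need decay in $|p|$ as well. Where $|p|\ge C\,t\,(1+\eta)^{1/3}$ one has $|\pa_\xi\Phi_{p,q}|\ge |p|/2$, so integrations by parts in $\xi$ gain powers of $|p|^{-1}$; on the complementary region $1+\eta\ge(|p|/Ct)^3$, the $(1+\eta)^{-N/3}$ decay already obtained translates, after $\eta$-integration, into polynomial decay of order $|p|^{6-N}$. A cutoff splitting these two regions then gives $|Z_{p,q}(t)|\le C_{M,N}(1+|p|)^{-M}(1+|q-\ell|)^{-N}$, rendering $\sum_{p,q}Z_{p,q}$ absolutely convergent together with all $t$-derivatives and establishing both parts. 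The main obstacle throughout is the diagonal case: without the uniform-in-$c$ version of Proposition \ref{symbol properties}(c), the non-classical decay of $\pa_\eta G$ at infinity would prevent any uniform handling of $t\to T^+$ at a limit point of the length spectrum.
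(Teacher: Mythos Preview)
Your proposal is correct and shares the paper's central insight: Poisson summation, followed by the observation that in the diagonal case $q=\ell$ the uniform-in-$c$ clause of Proposition~\ref{symbol properties}(c) gives $1/\pa_\eta\Phi_{p,\ell}\in\Sigma^{-2/3,2/3}$ independently of $t\in[2\pi\ell,2\pi\ell+\e)$, so that each $\eta$-integration by parts takes $\Sigma^{\ga,\gb}\to\Sigma^{\ga-2/3,\gb-1/3}$. The off-diagonal treatment is likewise the same.

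The one genuine difference is how you obtain summability in $p$. The paper does not split into regions; instead it introduces a second first-order operator
\[
Af=\frac{i}{2\pi}\pa_\xi f+\frac{t}{2\pi}(\pa_\xi F)f,
\]
designed so that $p^{-1}Ae^Q=e^Q$. Its transpose satisfies $A'(\Sigma^{\ga,\gb})\subset\Sigma^{\ga-1/3,\gb+1/3}$ (since $\pa_\xi F=\pa_\xi G\in\Sigma^{-1/3,1/3}$), and applying $(A')^M(B')^N$ in one stroke produces an amplitude in $\Sigma^{-(M+2N)/3,(M-N)/3}$ with coefficient $O(|p|^{-M})$; taking $N\ge M+5$ (and $M\ge 2$ for $p\ne 0$) gives both integrability and $p$-summability simultaneously. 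Your alternative---cut off at $|p|\sim (1+\eta)^{1/3}$, use standard $\xi$-non-stationary-phase on one side and the accumulated $\eta$-decay on the other---also works, but requires bookkeeping you have left implicit: the cutoff must be checked to lie in $\Sigma^{0,0}$, the mixed terms arising from iterated $\xi$-integration by parts (where $\pa_\xi(1/\pa_\xi\Phi)\in|p|^{-2}\Sigma^{-4/3,1/3}$ feeds $\eta$-growth back in) must be tracked, and the $\eta$-integral over the finite range $\eta\lesssim|p|^3$ must be bounded explicitly. The paper's operator $A$ sidesteps all of this by packaging the $p^{-1}$ gain into a symbol-calculus statement parallel to that for $B'$.
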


%This proof is  essentially a rewriting of Section 3 of our paper
%\cite{CGJ} using in a more systematic way the symbolic properties
%described in Section \ref{sec:symbols}. 

\begin{proof}
In the proof of part (a) we will assume for simplicity that $\ell=1$.  The
other cases are similar.  
Let $F(\xi,\eta)=\eta + G(\xi,\eta ) $.   The Poisson summation formula says
\[
Z(t) =  \sum_{(p,q)\in \Z^2} Z_{p,q}(t)
\]
with 
\[ 
Z_{p,q}(t)= \int _{\R^2} \psi(\xi)\chi ( \xi,\eta ) e^{i(t
  F(\xi,\eta) -2\pi (p\xi + q\eta ))}d\xi d\eta ~,
  \]
 Denote  
$Q=i(t
  F(\xi,\eta) -2\pi (p\xi + q\eta ))$ and 
define two differential operators in order to integrate by parts
as follows.
\[ 
Af= e^{itF  (\xi, \eta)  }
\frac{i}{2\pi }\pa _\xi( e^{-itF  (\xi, \eta)}f)= \frac{i}{2\pi }
\pa _\xi f + \frac{t}{2\pi}(\pa_\xi F )f
~.
\]
\[ 
Bf= \frac{1}{i(t \pa _\eta F (\xi, \eta) -2\pi q)}\pa _\eta f 
\]
Then $\displaystyle \frac1p A(e^Q)= B(e^Q)=e^Q $.

Let us start with the most singular case $q=1$.   Because $G\in \gS^{2/3,1/3}$,
we have 
\[
\de_\xi F = \de_\xi G \in \gS^{-1/3,1/3}, \quad \de_\eta G \in \gS^{2/3,-2/3} \, .
\]
The main point is that by Proposition \ref{symbol properties},
\[
\frac{\psi(\xi)\chi(\xi,\eta)}{t\de_\eta F-2\pi} 
= \frac{\psi(\xi)\chi(\xi,\eta)}{t-2\pi + \de_\eta G}  \in \gS^{-2/3,2/3}
\]
uniformly in $t>2\pi$.  Let $A'$ and $B'$ denote the adjoint operators
to $A$ and $B$.  Then Proposition \ref{symbol properties} also implies
that
\[
A' (\Sigma  ^{\ga,\gb  }) \subset  \Sigma ^{\ga -1/3,\beta +1/3};
\quad B'(\Sigma ^{\ga,\beta } )\subset \de_\eta \gS^{\a-2/3,\b+2/3} 
\subset \Sigma ^{\ga -2/3,\gb -1/3}
\]
with constants in the estimates independent of $p$.
\[ 
Z_{p,1}(t)=p^{-M}\int e^Q(A')^M (B')^N ( \psi(\xi) \chi (\xi,\eta ))
 d\xi d\eta ~.\]
Since $\psi(\xi)\chi(\xi,\eta)\in \gS^{0,0}$, the symbol 
$p^{-M}(A')^M ( B')^N ( \psi(\xi) \chi (\xi,\eta ))$
belongs to
$\Sigma ^{ - (M+2N)/3,(M- N)/3 }$ with constants $O(|p|^{-M})$.
Note that symbols in $\gS^{\a,\b}$ supported in the cone $0 \le \xi \le \eta$ are
integrable if $\max(\a,0) + \b< -1$.     Therefore it suffices to take $N\ge M + 5$
to get a convergent integral.    In case $p=0$, take $M=0$ and $N$ large.  
Otherwise, take $M\ge 2$ so  
that the sum over $p$ converges.   So far we have shown that
that sum over $(p,1)$ is bounded.  To prove smoothness in $t$, 
differentiate $k$ times.  
This yields an extra factor $F^k\in \gS^{2k/3,k}$ in the integrand.   
Therefore, 
\[
\sum_{p\in \Z} (d/dt)^k Z_{p,1}(t)
\]
is represented by sum of integrals of symbols 
$\gS^{2k/3 -M/3 - 2N/3, k+M/3 - N/3}$ with constants $O(|p|^{-M})$.  
The integrals converge for $N$ large, for instance  $N\ge M + 3k + 10$.
Again, take $M=0$ when $p=0$ and $M\ge 2$ when $p\neq0$.

Next, consider $q\neq 1$.   The same estimates as before
hold for $A'$.    Recall that we may assume that $\chi$ is zero
on a fixed compact subset.  Provided $t$ is close enough to $2\pi$, and
$\k_1$ sufficiently small, and $|\xi| + |\eta|$ sufficiently large
on the support of $\chi$, we have 
\[
|\de_\eta G|\le C((1+|\xi|)/(1+|\eta|))^{2/3} \le  C\k_1 < < 1
\]
and hence $|t\pa _\eta F -2\pi q| \sim |q-1|$ and 
\[
\psi(\xi)\chi(\xi,\eta)/(t\de_\eta F - 2\pi q) \in \gS^{0,0}, \quad \mbox{with 
constants} \quad  O(1/|q-1|)
\]
Hence, 
\[
B' :\Sigma^{\ga,\gb } \ra \Sigma ^{\a,\b - 1}
\]
with constants $O(1/|q-1|)$.  
So the symbol $p^{-M}(A')^M ( B')^N ( \psi(\xi)\chi (\xi,\eta ))$
belongs to
$\Sigma ^{ -M/3,-N }$ 
with constants $O(|p|^{-M}|q-1|^{-N})$.  The integral converges
provided $N\ge2$.   When $p=0$, take $M=0$; when $p\neq0$ take
$M\ge2$.  Then the sum over $(p,q)$, with $q\neq1$ also converges.
Lastly, if we apply $(d/dt)^k$, we obtain
integrands that belong to
$\Sigma ^{2k/3-M/3,k-N }$  with constants $O(|p|^{-M}|q-1|^{-N})$,
so the symbols are integrable if $N \ge 5k/3 + 2$.  
To sum the series, for $p\neq0$, $q\neq1$, let $M\ge 2$.    
For $p=0$, $q\neq1$,  use $M=0$.  This concludes the proof of part (a) of
Theorem \ref{mainbyparts}.

For part (b), once again for any $\d>0$ we can choose
$\k_1>0$ sufficiently small that on the support of $\chi$ (where
$|\xi|+ |\eta|$ is sufficiently large)
\[
|\de_\eta G| \le C ((1+|\xi|/(1+|\eta|)^{2/3} \le C \k_1^{2/3} < \d
\]
Let $q_0$ be the integer such that $|T-2\pi q_0|$ is smallest, and choose
both $\d>0$ and $\e>0$ less than $|T-2\pi q_0|/4$.  Then for $t\in (T-\e,T+\e)$,
$|t\de_\eta F - 2\pi q_0| \ge |T-2\pi q_0|/4$ on the support of $\chi$.  Hence,
\[
\psi(\xi)\chi(\xi,\eta)/(t\de_\eta F - 2\pi q) \in \gS^{0,0}, \quad \mbox{with 
constants} \quad  O(1/(1 + |q-q_0|))
\]
and the rest of the proof proceeds as in the case $q\neq1$ above.  This
concludes the proof of Theorem \ref{mainbyparts}. 
\end{proof}

The next proposition will take care the integral $I_3$.
\begin{prop}  \label{I3smooth}
Let $\psi \in C^\infty(\R)$ satisfy $\psi(\eta)=1$ for $\eta$ sufficiently large
and $\psi(\eta) = 0$ for $\eta\le 0$. Suppose that where  $F$ belongs to 
$\Sigma ^{1/3,2/3}$ and $\pa _\xi F $  is  positive elliptic in  $\Sigma ^{-2/3,2/3}$
on the cone $0 < \eta < \xi$.  For each $T<\infty$ there exists $\k_2$ such
that if $\chi(\xi,\eta) $ is a classical symbol of degree $0$ supported 
in a cone $\G$ given by $\xi > \k_2 |\eta| $, then  the 
distribution
\[ 
Z(t)=\sum_{(m,n)\in \Z^2}  \psi(n) \chi(m,n) e^{it F(m,n)}~,
\]
satisfies $Z\in C^\infty((0,T))$.
\end{prop}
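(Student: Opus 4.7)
The plan is to adapt the Poisson summation framework of Section \ref{sec:smoothsymbols} and Theorem \ref{mainbyparts}, with the roles of $\xi$ and $\eta$ interchanged. The simplification compared to Theorem \ref{mainbyparts} is that on $\G_3$ the phase has no stationary points at all, so $Z$ will turn out to be smooth on the entire interval $(0,T)$ rather than only on a one-sided neighborhood of specific singular times. Indeed, by ellipticity of $\pa_\xi F\in\Sigma^{-2/3,2/3}$ combined with the defining inequality $\xi \ge \k_2|\eta|$ of $\G_3$, one has $0 < t\pa_\xi F \le CT\k_2^{-2/3}$ on the support of $\chi$; choosing $\k_2$ so large that this bound falls strictly below $\pi$ forces
\[
|t\pa_\xi F - 2\pi p| \ge \pi\max(1,|p|)
\]
for every integer $p$ and every $t\in(0,T)$.

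After applying Poisson summation to write $Z(t) = \sum_{p,q\in\Z}Z_{p,q}(t)$ with
\[
Z_{p,q}(t) = \int_{\R^2} e^{i(tF(\xi,\eta) - 2\pi(p\xi + q\eta))}\psi(\eta)\chi(\xi,\eta)\,d\xi\,d\eta,
\]
I would integrate by parts in $\xi$ using the operator $Af = (i(t\pa_\xi F - 2\pi p))^{-1}\pa_\xi f$, for which $A(e^{i\phi}) = e^{i\phi}$. By Proposition \ref{symbol properties}, the multiplier lies in $\Sigma^{2/3,-2/3}$ when $p=0$ (elliptic quotient) and uniformly in $\Sigma^{0,0}$ with constants $O(1/|p|)$ when $p\neq 0$. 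Iterating the adjoint $A'$ a total of $M$ times transforms the amplitude $\psi(\eta)\chi(\xi,\eta)\in\Sigma^{0,0}$ into a symbol $S_{p,M}$ lying in $\Sigma^{-M/3,-2M/3}$ for $p=0$, or in $\Sigma^{-M,0}$ with constants $O(|p|^{-M})$ for $p\neq 0$.

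To obtain summability in $q$, I would rewrite
\[
Z_{p,q}(t) = \widehat{h_p}(q,t), \qquad h_p(\eta,t) = \int_\R e^{i(tF(\xi,\eta) - 2\pi p\xi)} S_{p,M}(\xi,\eta)\,d\xi,
\]
as the Fourier transform in $\eta$ of the inner integral. Using the support condition $\xi \ge \k_2\eta$ and $\eta\ge\eta_0 > 0$ (from the cutoff $\psi$), a direct estimate gives $|h_0(\eta,t)| \le C_M\eta^{1-M}$ for $M > 3$. Differentiating in $\eta$ introduces either a factor $t\pa_\eta F\in\Sigma^{1/3,-1/3}$ from the exponential or a derivative of $S_{0,M}$; in either case the resulting $O(\xi^{1/3})$-growth is absorbed by the strong $\xi^{-M/3}$-decay of $S_{0,M}$, so $|\pa_\eta^k h_0(\eta,t)| \le C_{M,k}\eta^{1-M}$ whenever $M > k + 3$. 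Hence $h_p(\cdot,t)$ is Schwartz in $\eta$ (with the additional factor $O(|p|^{-M})$ when $p\neq 0$), and its Fourier transform satisfies $|\widehat{h_p}(q,t)| \le C_{M,N}(1+|p|)^{-M}(1+|q|)^{-N}$ for any $N$, so the double sum over $(p,q)$ converges absolutely.

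Smoothness in $t$ then follows by differentiating under the integral: each $d/dt$ pulls out a factor $F^k\in\Sigma^{k/3,2k/3}$, and re-running the preceding argument with $M$ enlarged by $O(k)$ keeps all estimates uniform on any compact subinterval of $(0,T)$, yielding $Z\in C^\infty((0,T))$. The main obstacle is the Schwartz bookkeeping for $h_p$: one must carefully track how repeated $\eta$-differentiation interacts with the non-classical symbol classes $\Sigma^{\a,\b}$ from Section \ref{sec:symbols}, verifying that the $\xi^{1/3}$-growth produced per $\eta$-derivative is always strictly dominated by the $\xi^{-M/3}$-decay generated by $A'^M$ for $M$ sufficiently large. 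Once this is in hand, smoothness of $Z$ on $(0,T)$ is immediate.
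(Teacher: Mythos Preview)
Your proposal is correct and follows essentially the same approach as the paper: Poisson summation, the choice of $\k_2$ large so that $0<t\pa_\xi F\le CT\k_2^{-2/3}<\pi$ on $\supp\chi$, iterated $\xi$-integration by parts giving amplitudes in $\Sigma^{-M/3,-2M/3}$ (for $p=0$, via ellipticity of $\pa_\xi F$) or $\Sigma^{-M,0}$ with $O(|p|^{-M})$ (for $p\neq0$), and then $\eta$-integration by parts for $q$-decay. One small slip: your displayed inequality $|t\pa_\xi F-2\pi p|\ge\pi\max(1,|p|)$ is false at $p=0$ (there $|t\pa_\xi F|$ is small, not $\ge\pi$), but you immediately treat $p=0$ correctly via the elliptic quotient, so this is only a phrasing issue; the paper likewise splits into the cases $p=0$ and $p\neq0$ for exactly this reason.
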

\begin{proof}  
As usual, the Poisson summation formula implies
\[
Z(t) =  \sum_{(p,q)\in \Z^2} Z_{p,q}(t)
\]
with 
\[ 
Z_{p,q}(t)= \int _{\R^2} \psi(\eta)\chi ( \xi,\eta ) e^{i(t
  F(\xi,\eta) -2\pi (p\xi + q\eta ))}d\xi d\eta ~,
  \]
 Denote  
$Q=i(t
  F(\xi,\eta) -2\pi (p\xi + q\eta ))$ .  
  
 Case 1. $p=0$.  Then  $ Q = i(tF-2\pi q\eta)$ and 
 \begin{align*}
Z_{0,q}(t) =  \int\int e^{Q} \psi(\eta) \chi(\xi,\eta) \, d\xi d\eta
 &
 = - \int\int e^{Q} \de_\xi (\chi/\de_\xi Q)\psi(\eta) \, d\xi d\eta \\
 & =
 \cdots = 
 \int\int e^{Q} a_M(\xi,\eta)\psi(\eta) \, d\xi d\eta 
 \end{align*}
 with $a_0= \chi$, $a_{M+1} = -\de_\xi (a_M/\de_\xi Q)$.   Since
 $\de_\xi F$ is positive elliptic in $\Sigma^{-2/3,2/3}(\G)$,
 \[
 1/\de_\xi Q = 1/it\de_\xi F
 \]
 belongs to $\Sigma^{2/3,-2/3}$ with support on $\G$ and 
 bounds depending on $1/t$.  By induction,  $a_M \in \Sigma^{-M/3,-2M/3}$
 with support in $\G_3$.   Thus $Z_{0,q}$ is represented by a convergent
 integral.  Moreover, if $q=0$, we may differentiate $k$ times with respect to
 $t$ to obtain an integrand dominated by $|a_M F^k|$ which is integrable
 if $k < M/3 - 2$.  For $q\neq0$, use
 \[
 e^{2\pi i q\eta} 
 = \frac1{2\pi i q} \de_\eta e^{2\pi i q\eta} 
 \]
 to integrate by parts $N$ times to obtain
 \[
(d/dt)^j Z_{0,q}(t) =  \frac{\pm 1}{(2\pi i q)^N}
  \int\int e^{Q} \de_\eta^N\left[(iF)^j e^{itF} \psi(\eta) a_M(\xi,\eta)\right] \, d\xi d\eta 
 \]
 The integrand is majorized by $(1+|\xi|)^{j+ (N-M)/3}$ which is convergent in 
 $\G$  for sufficiently large $M$ depending on $j$ and $N$.  (Note
 that because $\chi$ is supported in $\xi  < |\eta|$, 
$\psi'(\eta) \chi(\xi,\eta)$ has compact  support, and every term
 in  which a derivative falls on $\psi$ is convergent.)  The factor $1/q^N$ 
 makes the sum over $q$ convergent as  well.

Case 2.  $p\neq0$.  We use the same integration by parts as in Case 1.
Note that the set of $(m,n)\in \Z^2$ where $\psi(n)\chi(m,n)>0$ and 
$m< \xi_0 $ is finite.  Any finite sum of 
exponentials is smooth in $t$, so we may assume without loss of
generality that $\chi(\xi, \eta) $ is supported on $\xi > \xi_0$
for some large $\xi_0$.  It follows that 
\[
\de_\xi F(\xi,\eta) \le C\xi^{-2/3}\eta^{2/3}
\]
on the support of $\psi(\eta) \chi(\xi,\eta)$.   Furthermore,
if $\k_2$ is sufficiently large, depending on $T$,
\[
0 \le t \de_\xi F \le C T\k_2^{-2/3}  \le \pi, \quad \mbox{for all} 
\quad 0 \le t \le T
\]
Thus 
\[
\chi/\de_\xi Q = 1/i(t\de_\xi F - 2\pi p) \in \Sigma^{0,0}(\G)
 \]
 with bounds $O(1/|p|)$.    By induction, 
 \[
 a_M \in \Sigma^{-M,0}
 \]
 with bounds $O(1/|p|^M)$.   The rest of the proof proceeds as in Case 1.
If $q=0$, then no more integrations  by parts are needed.  If $q\neq0$, 
then one uses integration by parts in $\eta$ as in Case 1 to introduce a factor 
$|q|^{-N}$ so as to be able to  sum in both $p$ and $q$. 
 \end{proof}

Finally, we assemble these ingredients to prove Theorem \ref{Friedl theorem}.
Part (a) was already proved at the end of Section \ref{sec:geodesics}. 
For part(b), choose $T \notin \bar\L_F \cup (-\bar\L_F) \cup\{0\}$, 
and choose $\e$ so that 
\[
[T-\e,T+\e] \cap  (\bar\L_F \cup (-\bar\L_F) \cup\{0\}) = \emptyset
\]
Choose $\k_1$ sufficiently small depending on the symbol bounds and 
the distance from $[T-\e,T+\e]$ 
to $2\pi \Z$ and $\k_2$ sufficiently large depending on the size of $T$, 
we may apply Theorem \ref{mainbyparts} (b), Proposition \ref{I2smooth} and 
Proposition \ref{I3smooth}, respectively, to $I_1$, $I_2$ and $I_3$
to obtain smoothness in $(T-\e,T+\e)$.  Note that
Proposition \ref{Friedl symbol} implies that the correponding phases 
satisfy the appropriate symbol estimates.  For part (c), 
apply Theorem \ref{mainbyparts} (a) to $I_1$, to obtain smoothness
in $[2\pi \ell, 2\pi \ell + \e)$.  By Propositions \ref{I2smooth} and \ref{I3smooth}
$I_2$ and $I_3$ are smooth near $2\pi \ell$ on both sides.  This
concludes the proof of Theorem \ref{Friedl theorem}.

\section{Action variables}
\label{sec:actions}

The classical mechanical system associated to the Friedlander model
has two commuting conserved quantities,
\[
H(x,y,\xi,\eta) =  \sqrt{\xi^2 + (1+x)\eta^2}; \quad J(x,y,\xi,\eta) = \eta.
\]
In this section we will show how the action variables associated to
this system determine the first order asymptotics of the eigenvalues
and eigenfunctions of the Friedlander operator $L$.  (See Appendix B 
for the general framework of this calculation.)

Let $H_0$ and $J_0$ be real numbers satisfying $0 < |J_0| < H_0$. 
The Lagrangian submanifold
\begin{equation}\label{eq:lagrangian}
H = H_0, \quad J = J_0
\end{equation}
is a torus foliated by trajectories with initial 
velocity $(\xi_0,\eta_0)$ on $x=0$,
with $\xi_0>0$, $\xi_0^2 + \eta_0^2 =H_0$ and $J_0 = \eta_0$. 
These trajectories are tangent to the caustic 
\[
x= \frac{\xi_0^2}{\eta_0^2} = \frac{H_0^2-J_0^2}{J_0^2},
\]
depicted as a dotted line in the figure below and in Section \ref{sec:geodesics}.

%\begin{figure}[figure]

%\end{figure}

%\begin{center}
%\includegraphics[height=.7\textheight]{idla-multisource-random-50.png} \\
%$50$ point sources arranged at random in a box in $\Z^2$. \\ ~ \\
%The sources are iid uniform in the box $[0,500]^2$. \\
%Each source started with $3000$ particles.
%\end{center}

%\begin{figure}[htbp]
%\begin{center}
%\begin{tabular}{cc}
%\includegraphics[width=.49\textwidth]{idla-cylinder-501.png}
%&
%\includegraphics[width=.49\textwidth]{idla-cylinder-501-shaded}
%\\
%(a) & (b)
%\end{tabular}
%\caption{\label{whole symm diff}
%Simulation of internal DLA on the cylinder with $N=500$. 
%In (a) late points ($L_N>0$) and early points ($L_N<0$) 
%are indicated in blue and red, respectively.  (b) The intensity of
%the colors is varied according to the size of $L_N(x,y)$.
%}
%\end{center}
%\end{figure}

Fix $H=1$, so that $\xi_0^2 + \eta_0^2 = 1$.  
Denote by $\gamma(t) = (x(t),y(t),\xi(t),\eta(t))$
the trajectory given by Proposition
\ref{prop: reflection} on the interval $-T/2 \le t \le T/2$.
Define by $\gamma_1$ the loop (whose projection onto $\R^+\times S^1$ is
pictured in bold in the figure below)
that follows $\gamma$ on its two curved portions and then
the segment of the caustic $x = \xi_0^2/\eta_0^2$, 
with $\xi=0$, $\eta=\eta_0$ (oriented by $\dot y <0$) 
going from $\gamma(T/2)$ to $\gamma(-T/2)$.  

\bigskip

\begin{center}
\includegraphics[height=.5\textheight]{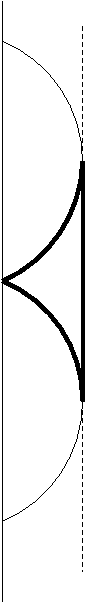} 
\end{center}

\bigskip

Define by $\gamma_2$ the loop that follows
the caustic $x = \xi_0^2/\eta_0^2$, $0\le y \le 2\pi$, $\xi=0$, and
$\eta = \eta_0$, with orientation $\dot y >0$.  

The loops $\gamma_1$ and $\gamma_2$ are a basis
for the homology of the torus $H=1$, $J = \eta_0$, and
the action coordinates are defined by 
\[
I_1 = \int_{\gamma_1} \xi \, dx + \eta\, dy,\quad
I_2 = \int_{\gamma_2} \xi \, dx + \eta\, dy.
\]
Since $\eta(t) = \eta_0$ is constant, $T = 4\xi_0/\eta_0^2$, and
$H = H_0 = \xi_0^2 + \eta_0^2$, we have
\begin{align*}
I_1 
&= \int_{\gamma_1} \xi \, dx = \int_{\gamma} \xi \, dx \\
& = 2\int_0^{T/2} (\xi_0- \eta_0^2t/2)^2 \, dt \\
& = \frac{4\xi_0^3}{3\eta_0^2} = \frac43 (H_0^2- \eta_0^2)^{3/2}{\eta_0^{-2}}
\end{align*}
Moreover,
\[
I_2 = \int_{\gamma_2} \xi \, dx + \eta\, dy = 
\int_{\gamma_2} \eta\, dy = 2\pi \eta_0
\]
More generally, by homogeneity,
\[
I_1 = \frac43 (H^2- J^2)^{3/2}{J^{-2}}; \quad I_2 = 2\pi J.
\]
Thus $H$ and $J$ can be written as functions of the action variables,
\[
H^2 = J^2 + \left(\frac34 I_1J^2\right)^{2/3} = 
 \frac{I_2^2}{4\pi^2} + \left(\frac{3I_1I_2^2}{16\pi^2}\right)^{2/3};
\quad J= \frac{I_2}{2\pi} \, . 
\]
Define $\Lambda(m,n)$ as the values of $H^2$ 
when the action variables belong to the integer lattice
$(I_1,I_2) = 2\pi (m,n)$, $(m,n)\in \Z^2$.  (Since $I_1>0$, 
we consider only $m>0$.)  Then
\[
\Lambda(m,n) = n^2 + (3\pi/2)^{2/3}m^{2/3}n^{4/3}.
\]
The Bohr-Sommerfeld energy levels of $H$ are defined
as the values of $H$ on the lattice, namely $\sqrt{\Lambda(m,n)}$. 

\begin{prop} \label{prop:asympt1}  In any sector of the form 
$0< c_1 \le |n|/m \le c_2$, the eigenvalues $\l(m,n)$ of the
Friedlander operator satisfy
\[
\sqrt{\l(m,n)} = \sqrt{\Lambda(m,n)} + O(1), \quad (m,n) \to \infty
\]
\end{prop}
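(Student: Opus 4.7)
The plan is to reduce the claim to the known asymptotic expansion of the Airy zeros supplied by Proposition \ref{Airy roots}. The starting point is the explicit algebraic comparison
\[
\l(m,n) - \Lambda(m,n) = n^{4/3}\bigl[\t(m) - (3\pi m/2)^{2/3}\bigr],
\]
which follows directly from \eqref{Friedl eigenvalues} and the formula for $\Lambda(m,n)$ derived just above the proposition. So the whole task boils down to controlling the error between $\t(m)$ and its leading-order Airy asymptotic, and then dividing by $\sqrt\l+\sqrt\Lambda$.

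First I would invoke Proposition \ref{Airy roots}: $\t\in S^{2/3}_{cl}(\R_+)$ with principal symbol $(3\pi m/2)^{2/3}$. The classical symbol property means the remainder after subtracting the leading homogeneous term lies in $S^{-1/3}(\R_+)$, and therefore
\[
\t(m) - (3\pi m/2)^{2/3} = O(m^{-1/3})\qquad (m\to\infty).
\]
Substituting back gives $\l(m,n)-\Lambda(m,n) = O(n^{4/3} m^{-1/3})$, with the constant depending only on the symbol bound for $\t$.

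Next I would exploit the hypothesis $c_1\le |n|/m\le c_2$. In this sector $n^{4/3}m^{-1/3}\asymp n$, so $\l(m,n)-\Lambda(m,n) = O(n)$. For the same reason the two terms in $\Lambda(m,n) = n^2 + (3\pi/2)^{2/3}m^{2/3}n^{4/3}$ are both of order $n^2$, which shows $\Lambda(m,n)\asymp n^2$ and hence $\sqrt{\Lambda(m,n)}\asymp n$; the same then holds for $\sqrt{\l(m,n)}$ since $\l = \Lambda + O(n) = (1+o(1))\Lambda$ in the sector. Writing
\[
\sqrt{\l(m,n)} - \sqrt{\Lambda(m,n)}
= \frac{\l(m,n) - \Lambda(m,n)}{\sqrt{\l(m,n)} + \sqrt{\Lambda(m,n)}}
= \frac{O(n)}{O(n)} = O(1),
\]
one obtains the assertion. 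No step here looks dangerous: the only nontrivial input is the classical-symbol structure of $\t$ provided by Proposition \ref{Airy roots}, and everything else is the elementary algebraic rationalization above together with the uniform comparability of $m$ and $n$ inside the sector.
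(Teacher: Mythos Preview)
Your argument is correct and follows the same route as the paper's: compare $\lambda(m,n)$ and $\Lambda(m,n)$ via the asymptotics of $\t(m)$ from Proposition~\ref{Airy roots}, then rationalize the difference of square roots using $\sqrt{\Lambda}\asymp n$ in the sector. In fact you are more careful than the paper, which only records $\t(m)=(3\pi m/2)^{2/3}+O(1)$; taken literally that would give $\sqrt{\l}-\sqrt{\Lambda}=O(n^{1/3})$, whereas your use of the full classical-symbol structure to get the sharper remainder $O(m^{-1/3})$ is exactly what is needed for the $O(1)$ conclusion.
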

The proposition is an immediate consequence of the formula for
$\L(m,n)$ and the fact that 
\[
\l(m,n) = n^2 + n^{4/3}\tau(m), 
\]
with $\tau(m) = (3\pi m/2)^{2/3} + O(1)$, $m=1, \, 2,\,  \dots$
(see Proposition \ref{Airy roots}).

Next, we show that the phases of the eigenfunctions are
equal, asymptotically, to the generating function of 
the canonical relation of the Lagrangian submanifold 
\eqref{eq:lagrangian}.  Indeed, if $\rho(x,y)$ is the
generating function, then by definition, the
Lagrangian is the graph of $(x,y)\mapsto (\rho_x,\rho_y)$:
\[
\rho_x^2 + (1+x)\rho_y^2 = H_0^2; \quad \rho_y = J_0 \, .
\]
Thus, 
\[
\rho_x = \pm \sqrt{H_0^2 - J_0^2 - xJ_0^2};\quad 0 \le x \le \frac{H_0^2-J_0^2}{J_0^2},
\]
and 
\[
\rho(x,y) =  \pm \frac23 \frac{(H_0^2 - J_0^2 - xJ_0^2)^{3/2}}{J_0^2} + J_0y
\]
Notice that the upper limit of the range of $x$ is exactly the
caustic
\[
x= \frac{H_0^2-J_0^2}{J_0^2},
\]
so that the graph over this interval in $x$ (and all $y$) covers
the whole Lagrangian submanifold.  

Now let the submanifold be given by $(I_1,I_2) = 2\pi (m,n)$.  Then
\[
H_0^2 = n^2 + \left(\frac32 \pi mn^2\right)^{3/2}; \quad J_0 = n,
\]
and we have two generating functions
\[
\rho_\pm(x,y) = 
\pm \frac23 \left( \left(\frac{3\pi}{2}m\right)^{2/3} - n^{2/3}x\right)^{3/2} + ny.
\]

To compare this with the phase of $\f_{m,n}$, recall that for $t_m > n^{2/3}x$, 
\begin{align*}
\f_{m,n}(x,y) &= \Ai(n^{2/3}x - t_m)e^{iny}  \\
& \sim
\frac12\pi^{-1/2}|n^{2/3}x - t_m|^{-1/4}
\sin \left(\frac23(t_m- n^{2/3}x)^{3/2} 
+ \pi/4\right) e^{iny}\, .
\end{align*}
(See Proposition \ref{Friedl spectrum} and Appendix A.)  Thus $\f_{m,n}$ 
is a sum of two terms with phases 
\[
\pm \left[\frac23 \left(t_m - n^{2/3}x\right)^{3/2} +  \pi/4\right] -\pi/2 + ny
= \rho_\pm(x,y) + O(1),
\]
as $(m,n)\to \infty$ in sectors of the form $0 < c_1 < |n|/m < c_2$ 
(since $t_m = (3\pi m/2)^{2/3}(1+ O(1/m))$).

\section*{Appendix A: Airy functions}\label{app:airy}

An Airy function is a solution  $y(t)$ of the differential
equation $y''=ty$. The classical properties of Airy functions
are treated in \cite{AS}, Section 10.4 and \cite{H}, Section 7.6,
and reviewed briefly here.

The {\it Airy  function} (or {\it Airy integral}), denoted  $\Ai$, was
introduced in 1838  by G.B. Airy in the remarkable paper \cite{Ai}
in order to describe the intensity of the light near a caustic.
This allowed him  to propose a good theory for the rainbow.
 He  defined the function
$\Ai$ by the oscillatory integral
\begin{equation}\label{Airy function}
\Ai(s) = \frac1{2\pi} \int_{-\infty}^\infty e^{i(sx + x^3/3)} \, dx
\end{equation}
In other words,  $\Ai$ is the inverse Fourier transform of 
$x\mapsto e^{ix^3/3}$.

The Airy function $\Ai(t)$ is positive for $t\geq 0$ and decays exponentially 
as $t\ra +\infty$; it is oscillating for $t<0$ with an expansion,
given by the method of stationary phase, of the form
%The function ${\rm Bi }$ is another Airy function  defined by its 
%Cauchy data at $t=0$
%${\rm Bi}(0)=3^{-1/6}/\Gamma (2/3)(=\sqrt{3}{\rm Ai}(0)$
%and
%${\rm Bi}'(0)=3^{1/6}/\Gamma (1/3)(=-\sqrt{3}{\rm Ai}'(0)$.
%The Wronskian $w(t)= {\rm Ai}(t) {\rm Bi}'(t)- {\rm Ai}'(t) {\rm
%  Bi}(t)$
%is constant $\equiv 1/\pi $.
\[
\Ai(t) = \pi^{-1/2}|t|^{-1/4}\left[\sin\left(\zeta + \frac{\pi}4\right) + 
\Re(e^{i\zeta}\alpha(\zeta))\right], \quad t < 0, 
\]
with $\zeta= \frac{2}{3} |t|^{3/2}$ and $\alpha$ a classical symbol of order $-1$. 
Differentiating, we find
%\[ {\rm Bi}(t)\equiv
%\pi^{-\ha}|t|^{-1/4}
%\left[ \cos \left(\zeta +\frac{\pi}{4}\right)\sum _{r=0}^\infty
%c_{2r}\zeta ^{-2r}+ \sin \left(\zeta +\frac{\pi}{4}\right)\sum _{r=0}^\infty
%c_{2r+1}\zeta ^{-2r-1} 
% \right]~.\]
\[ {\Ai}'(t) = -\pi^{-\ha}|t|^{1/4}
\left[ \cos \left(\zeta +\frac{\pi}{4}\right) + \Re(e^{i\zeta} \beta(\zeta))\right],
\quad t < 0, 
\]
with $\beta$ a classical symbol of order $-1$.

The function $\Ai$ has infinitely many real negative zeros 
\[ 
\cdots < -t_m < \cdots < -t_1 < 0 \, . 
\]
To derive the asymptotic formula for $t_m$, denote
\[
f(s) = is^{1/3} \Ai(-s^{2/3}) - \Ai'(-s^{2/3}), \quad s> 0.
\]
The foregoing asymptotic expansions can be written
\[
f(s) = \frac{1}{\sqrt{\pi}} s^{1/6}e^{i(2s/3+ \pi/4) + c(s)},
\]
where $c$ is a complex valued classical symbol of order $-1$.
Define
\[
\th(s) = 2s/3 + \pi/4 + \Im c(s) = \arg f(s),
\]
$\Im f(s) >0$ for all $0 < s < t_1^{3/2}$, and specify the branch 
of $\th$ by $0 < \theta(s) < \pi$ in that range.   Thus $\th$ is a classical symbol
of order $1$ with principal part $2s/3$ as $s\to \infty$, and 
\begin{equation}\label{Airy arg}
\tan \th(s) = - \frac{s^{1/3}\Ai(-s^{2/3})}{\Ai'(-s^{2/3})}, \quad s>0.
\end{equation}

\begin{prop}\label{prop:Airy appendix}  Let $\th$ be as above.
Then $\th'(s)>0$ for all $s>1/4$, and there is an absolute 
constant $c_1<1$ such that 
\[
\t(\xi) = [\th^{-1}(\pi \xi)]^{2/3} > 0, \ \mbox{for} \ \xi\ge c_1,
\]
is well defined; $\t\in S^{2/3}_{cl}(\R_+)$ with principal term 
$(3\pi\xi/2)^{2/3}$.  Finally, 
\[
t_m = \t(m), \quad m= 1,\, 2, \, \dots,
\]
where $0> -t_1> -t_2 > \cdots$ are the zeros of $\Ai$. 
\end{prop}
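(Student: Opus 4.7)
The plan is threefold: establish monotonicity of $\theta$ on $(1/4,\infty)$ via a direct ODE argument; extract the symbol class of $\tau$ by inverting the classical-symbol expansion of $\theta$; and identify the lattice of roots by reading off when $\theta$ takes values in $\pi\Z$.

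For monotonicity, I would write $\theta(s)=\arg f(s)$ with $f(s)=is^{1/3}\Ai(u)-\Ai'(u)$ and $u=-s^{2/3}$, so that $\theta'(s)=\Im(f'(s)/f(s))$. The Airy equation $\Ai''(u)=u\Ai(u)$ is what makes this tractable: it collapses $f'$ almost entirely back into $f$. A short differentiation yields
\[
f'(s)=\tfrac{2i}{3}f(s)+\tfrac{i}{3}s^{-2/3}\Ai(u),
\]
and taking imaginary parts and clearing denominators gives the closed form
\[
\theta'(s)=\frac{2\Ai'(u)^2+(2s^{2/3}-s^{-1/3})\Ai(u)^2}{3\bigl(\Ai'(u)^2+s^{2/3}\Ai(u)^2\bigr)}.
\]
For $s\ge 1/2$ the coefficient $2s^{2/3}-s^{-1/3}$ is non-negative and the numerator is strictly positive, since $\Ai$ and $\Ai'$ have no common zero (the Airy equation has linearly independent fundamental solutions, so $\Ai^2+(\Ai')^2>0$ everywhere). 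For $1/4<s<1/2$, $u$ ranges over a compact interval bounded away from the first zero of $\Ai'$ at $u\approx -1.019$, so $\Ai'(u)^2$ is bounded below by a positive constant there and the positive $2\Ai'(u)^2$ term dominates the negative $(2s^{2/3}-s^{-1/3})\Ai(u)^2$ contribution; this reduces to a pointwise inequality on a compact set.

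For the symbol structure of $\tau$, the given expansion $\theta(s)=2s/3+\pi/4+\Im c(s)$ with $c\in S^{-1}_{cl}(\R_+)$ places $\theta\in S^1_{cl}(\R_+)$ with principal term $2s/3$. Inversion of a classical symbol of positive order whose derivative is bounded below by a positive constant is a standard step in the symbol calculus: iteratively solving $\theta(s)=\pi\xi$ produces $s(\xi)\in S^1_{cl}(\R_+)$ with principal part $3\pi\xi/2$, and composition with the smooth positive function $r\mapsto r^{2/3}$ preserves the classical expansion, giving $\tau(\xi)=s(\xi)^{2/3}\in S^{2/3}_{cl}(\R_+)$ with principal part $(3\pi\xi/2)^{2/3}$. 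For the identification $\tau(m)=t_m$, observe that $\tan\theta(s)=-s^{1/3}\Ai(-s^{2/3})/\Ai'(-s^{2/3})$ vanishes precisely when $\Ai(-s^{2/3})=0$, i.e., at $s=t_j^{3/2}$; combined with the branch specification $0<\theta<\pi$ on $(0,t_1^{3/2})$ and the strict monotonicity from the previous step, this forces $\theta(t_m^{3/2})=m\pi$, so $\theta^{-1}(m\pi)=t_m^{3/2}$ and $\tau(m)=t_m$. Since $\theta(t_1^{3/2})=\pi$ with $t_1^{3/2}>1/4$, the domain of $\theta^{-1}$ contains $[\pi,\infty)$ and extends continuously below $\pi$ down to $\theta(1/4)$, so one may take $c_1=\theta(1/4)/\pi<1$.

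The main obstacle is the quantitative low-end monotonicity on the short interval $1/4<s<1/2$: far out, the asymptotic identity $\theta'(s)\to 2/3$ makes positivity automatic, but the specific threshold $1/4$ is not of asymptotic origin and must be verified from the explicit formula for $\theta'$ using bounds on $\Ai$ and $\Ai'$ in a compact pre-caustic region. Once this monotonicity is in hand, the symbol class of the inverse and the lattice identification $\tau(m)=t_m$ are essentially formal consequences of the symbol calculus and a root-counting argument.
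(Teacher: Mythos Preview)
Your overall architecture matches the paper's, but the formula you display for $\theta'(s)$ is wrong. With $a=\Ai(-s^{2/3})$ and $b=\Ai'(-s^{2/3})$, your identity $f'(s)=\tfrac{2i}{3}f(s)+\tfrac{i}{3}s^{-2/3}a$ is correct, but taking imaginary parts of $\bar f f'/|f|^2$ gives a cross term $-s^{-2/3}ab$, not a diagonal term $-s^{-1/3}a^2$. The correct expression is
\[
\theta'(s)=\frac{2s^{2/3}a^2+2b^2-s^{-2/3}ab}{3\bigl(s^{2/3}a^2+b^2\bigr)},
\]
which is exactly what the paper uses. Your displayed numerator $2b^2+(2s^{2/3}-s^{-1/3})a^2$ cannot come out of this computation, and your subsequent case split at $s=1/2$ and the appeal to numerical bounds on $(1/4,1/2)$ are therefore built on an incorrect formula.

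Once the formula is fixed, the ad hoc numerical check is unnecessary and the origin of the threshold $1/4$ becomes transparent. The numerator $2s^{2/3}a^2+2b^2-s^{-2/3}ab$ is a quadratic form in $(a,b)$ with discriminant $s^{-4/3}-16s^{2/3}$, which is negative precisely when $s>1/4$; hence the form is positive definite there and $\theta'(s)>0$ follows for all $(a,b)\neq(0,0)$, in particular for the actual Airy values. This is the paper's one-line argument, and it replaces your compact-interval verification entirely. The remaining parts of your proposal---inversion of $\theta$ within $S^1_{cl}$, composition with $r\mapsto r^{2/3}$, the root-counting via $\theta(t_m^{3/2})=m\pi$, and the choice $c_1=\theta(1/4)/\pi$---coincide with the paper's treatment.
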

\begin{proof}
\[
\th'(s) =  (1/3|f'|^2)[2s^{2/3} a^2 +  2b^2 -  s^{-2/3}ab], 
\quad a= \Ai(-s^{2/3}), \quad
b = \Ai'(-s^{2/3}).
\]
Therefore, $\th'(s)>0$ for $s> 1/4$.  Because $\th$ is increasing, 
the zeros of $\Ai$ are given by
\[
\th(t_m^{3/2}) = m\pi, \quad m = 1, \, 2, \, \dots
\]
Let $\th(1/4) = \th_0$.  $0 < \th_0 < \th(t_1^{3/2}) = \pi$ because 
$t_1^{3/2} \approx (2.33)^{3/2} > 1/4$.    The inverse function
$\s$ of $\th$ is  a classical symbol of order $1$ on $[\th_0,\infty)$.
Then $\t(\xi) = \s(\pi \xi)^{2/3}$ is defined
for all $\xi \ge \th_0/\pi$ and satisfies all the properties of the proposition.
\end{proof}

Now we turn to proof of Proposition \ref{Friedl spectrum} describing the 
spectrum of the Friedlander operator $L$.  Begin by noting that if $L$ 
is restricted to functions $f(x)$ of $x$ alone, it becomes
\[
Lf(x) = -f''(x),
\]
which has completely continuous spectrum on $L^2([0,\infty))$.  
Next, consider $\f$ in the  orthogonal complement,
\[
\{\f\in L^2(M): \ \int_{\R/2\pi \Z} \f(x,y) \, dy = 0 \}
\]
By separation of variables, the eigenfunctions are of
the form $f(x) e^{iny}$, where $n$ is a non-zero integer, and $f$ satisfies
\begin{equation}\label{eigen equation}
L_nf := (-\de_x^2 + n^2 (1+x)) f = -\l f
\end{equation}
For $n\neq0$, the operator $L_n$ acting on functions satisfying $f(0)=0$
is self-adjoint and has compact resolvent, so a complete system
of eigenfunctions is obtained by solving \eqref{eigen equation}
for $\l$ and $f$ such that $f(0)=0$ and $f\in L^2([0,\infty))$.  
Making the change of variables
\[
f(x) = A(n^{2/3}x -t),
\]
equation \eqref{eigen equation} becomes
\[
n^{4/3}(-A''(s) + sA(s)) = (\l - n^{4/3}t - n^2)A(s) \ .
\]
Choose $t$ so that $\l= n^2 + n^{4/3} t$, then the equation is 
the Airy equation
\begin{equation}\label{Airy equation}
-A''(s) + sA(s) = 0
\end{equation}
Since $f$ is in $L^2([0,\infty))$, the same is true of $A$.  
Up to a constant multiple, there is a unique such function $A$,
the Airy function $\Ai$.   
The Dirichlet condition $f(0)=0$ implies $\Ai(-t)=0$, and the roots
of this equation are $0 < t_1 < t_2 < \cdots$.  Hence 
the formulas in the proposition for the eigenfunctions
and eigenvalues are confirmed.

\section*{Appendix B:  Semiclassical properties of eigenfunctions
and eigenvalues for completely integrable systems}

Let $X$ be an n-dimensional manifold and $P_1$, \dots, $P_n$ be
commuting self-adjoint zero order semiclassical pseudodifferential
operators with leading symbols $p_1$, \dots, $p_n$.   In
Section \ref{sec:actions} we considered the example $X = \R^+ \times S^1$,
\[
P_1 = 
\h^2 (\de_x^2 + (1+x)\de_y^2); 
\quad
P_2 = i \h \de_y.
\]
(These operators are of order zero with respect
to $\de_x$, $\de_y$ and multiplication by $\h$.)

We seek the joint eigenfunctions $\f(x)$, $x\in X$, of $P_j$ in the form 
\begin{equation}\label{eq:eigenform}
\f(x) = a(x,h) e^{i\rho(x)/h}, \quad a(x,h) = a_0(x) + ha_1(x) + h^2 a_2(x) + \cdots,
\end{equation}
solving asymptotically as $\h \to 0$
\[
P_j \f  = \gl_j(h) \f, \quad j= 1,\, \dots, \, n, \qquad
\gl_j(h) = \gl_j(0) + h \gl_{j,1} + h^2 \gl_{j,2} + \cdots \, .
\]

Let $\gL_\rho$ be the graph of $d\rho$ in $T^*X$.   $\gL_\rho$ is a Lagrangian 
submanifold and determines
$\rho$ up to an additive constant; $\rho$ is known
as the {\em generating function} of $\gL_\rho$.
Setting $\h=0$, one finds the {\em eikonal equation},
the first of the heirarchy of equations for $a$, $\rho$, and $\gl_j(h)$:
\begin{equation}\label{eq:eikonal}
\left. p_j \right|_{\gL_\rho} = \gl_j(0) \, . 
\end{equation}

Let us confine ourselves to  the sequence $\h = 1/N$ for integers $N\to \infty$.  
Then \eqref{eq:eigenform}
defines $\f$ provided $\rho$ is well-defined modulo $2\pi \Z$.  To see
what this entails, denote by $\kappa$ the projection, $\gL_\rho \to X$, and
by $\gi$ the inclusion, $\gL_\rho \to T^*X$, and denote
\[
\ga = \sum_{j=1}^n \xi_j dx_j \, ,
\]
the canonical 1-form on $T^*X$.   The fact that $\gL_\rho$ is
the graph of $d\f$ implies
\begin{equation}\label{eq:localcanonical}
d\kappa^* \rho = \gi^* \ga \, .
\end{equation}
If $(\l_1(0),\dots,\l_n(0))$ is a regular value of $(p_1,\dots,p_n)$ and
$\gL_\rho$ is compact, then $\gL_\rho$ is a torus of dimension $n$.  Thus 
if $\gg_1$, \dots, $\gg_n$ is a basis of the first homology group of $\gL_\rho$, 
then \eqref{eq:localcanonical} implies 
\begin{prop}\label{prop:globalcanonical} $\rho$ is well-defined modulo
$2\pi \Z$ if and only if
\[
I_j : = \int_{\gg_j} \ga  \in 2\pi \Z, \quad j= 1, \, 2, \, \dots \, , n.
\]
\end{prop}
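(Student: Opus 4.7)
The plan is to lift $\rho$ from $X$ (where it may be multi-valued because of caustics) to a single-valued function on the Lagrangian torus $\gL_\rho$ by integrating the pullback of the canonical one-form along paths, and then to observe that the only obstruction to well-definedness modulo $2\pi \Z$ is the period lattice spanned by $I_1,\dots,I_n$.

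First, I would check that $\gi^*\ga$ is closed on $\gL_\rho$:
\[
d(\gi^*\ga) = \gi^*(d\ga) = \gi^*\omega = 0,
\]
because $\gL_\rho$ is Lagrangian.  Next, fix a basepoint $p_0 \in \gL_\rho$ and, for $p \in \gL_\rho$ and any path $\gg$ from $p_0$ to $p$, set
\[
\tilde\rho(p) = \int_\gg \gi^*\ga \, .
\]
By Stokes' theorem, $\tilde\rho(p)$ depends only on the homology class of $\gg$, and changing $\gg$ by a loop of class $\sum n_j[\gg_j]$ changes $\tilde\rho(p)$ by $\sum n_j I_j$.  Hence $\tilde\rho$ is well-defined as a map
\[
\tilde\rho : \gL_\rho \longrightarrow \R/(I_1 \Z + \cdots + I_n \Z).
\]

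The second step is to identify $\tilde\rho$ with $\kappa^* \rho$.  Relation \eqref{eq:localcanonical} reads $d(\kappa^*\rho) = \gi^*\ga = d\tilde\rho$ on $\gL_\rho$, so the two functions differ by a locally constant function; since $\gL_\rho$ is connected, adjusting the additive constant in $\rho$ gives $\kappa^*\rho = \tilde\rho$ identically.  Consequently $\rho$ is well-defined modulo $2\pi\Z$ (as a function on $\gL_\rho$ descending to the sheets of $\kappa$) precisely when $\tilde\rho$ is, which by the previous step is equivalent to
\[
I_1 \Z + \cdots + I_n \Z \subseteq 2\pi\Z,
\]
that is, $I_j \in 2\pi \Z$ for every $j = 1,\dots,n$.

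The one conceptual point that needs care — and really the main ``obstacle,'' though a minor one — is to remember that $\rho$ lives on the Lagrangian $\gL_\rho$ rather than on $X$: because $\kappa$ has folds along the caustic, different sheets of $\gL_\rho$ over a common point of $X$ correspond to different local branches of the generating function, and the Bohr–Sommerfeld requirement is that $e^{iN\rho}$ be well-defined along each sheet, which is exactly the condition $\tilde\rho \in \R/2\pi\Z$.  Once this interpretation is fixed, the proposition is an immediate consequence of de Rham together with the path-integral definition of $\tilde\rho$ above.
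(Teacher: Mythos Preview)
Your argument is correct and is precisely the standard expansion of what the paper leaves implicit: the paper states the proposition as an immediate consequence of the relation $d\kappa^*\rho = \gi^*\ga$ and the torus topology, without writing out the de~Rham / path-integral details you supply. There is no substantive difference in approach; you have simply filled in the steps the paper skips.
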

The functions $I_j$ are known as action coordinates on $T^*X$.  
Let $\m\in \Z^n$.  In the generic case in which 
\[
(I_1,\dots, I_n) = 2\pi \m
\]
defines an $n$-dimensional Lagrangian torus in $T^*X$, the
torus is known as a Bohr-Sommerfeld surface, and we will denote 
it by $S(\m)$.   

Proposition \ref{prop:globalcanonical} says that the generating 
function $\rho_\m$ of $S(\m)$ is well-defined modulo $2\pi\Z$.
Thus one can follow the ansatz \eqref{eq:eigenform} 
to seek a joint eigenfunction $\f$ whose phase function $\rho$ equals $\rho_\m$
to first order as $\h \to 0$ and whose eigenvalue $\gl_j$ with respect $P_j$ is 
given to first order by $p_j$ evaluated on $S(\m)$ as in 
\eqref{eq:eikonal}.  Under suitable further hypotheses, all but finitely many 
joint eigenfunctions are obtained in this way.  Typically, $\f$ and $S(\m)$ 
exist for a cone of values of $\m\in \Z^n$  (see \cite{CdV2}).

\bibliographystyle{plain}

\end{document}